\title[Generically Multiply Transitive Actions]{Groups of finite Morley rank with a generically multiply transitive action on an abelian group}
\author{Ay\c{s}e Berkman}
\address{Mathematics Department, Mimar Sinan Fine Arts University, Istanbul, Turkey}
\email{ayse.berkman@msgsu.edu.tr, ayseberkman@gmail.com}
\author{Alexandre Borovik}
\address{Department of Mathematics, University of Manchester, UK}
\email{alexandre $\gg {\rm at} \ll $ borovik.net}
\thanks{\textit{Keywords:} Groups of \fmrd, generically transitive actions}
\thanks{\copyright\ 2021 Ay\c{s}e Berkman and Alexandre Borovik}
\date{10 February 2022}
\subjclass[2020]{20F11, 03C60}
\dedicatory{Dedicated to Tuna Alt\i nel in celebration of his freedom}
\begin{document}
\newtheorem{theorem}{Theorem}
\newtheorem{problem}{Problem}
\newtheorem{conjecture}{Conjecture}
\newtheorem{lemma}{Lemma}[section]
\newtheorem{corollary}[lemma]{Corollary}

\newtheorem{proposition}[lemma]{Proposition}

\newtheorem{remark}[lemma]{Remark}
\newtheorem{fact}[lemma]{Fact}
\newtheorem{question}[lemma]{Question}
\theoremstyle{definition}
\newtheorem*{definition}{Definition}
\newtheorem*{notation}{Notation}
\newcommand{\acf}{algebraically closed field }
\newcommand{\acfd}{algebraically closed field}
\newcommand{\acfs}{algebraically closed fields}
\newcommand{\fmr}{finite Morley rank }
\newcommand{\rk}{\operatorname{rk}}
\newcommand{\pgl}{\operatorname{PGL}}
\newcommand{\ddeg}{\operatorname{deg}}
\newcommand{\psrk}{{\rm psrk}}
\newcommand{\stab}{{\rm stab}}
\newcommand{\fmrd}{finite Morley rank}
\newcommand{\bi}{\begin{itemize}}
\newcommand{\ei}{\end{itemize}}
\newcommand{\bd}{\begin{description}}
\newcommand{\ed}{\end{description}}
\newcommand{\bF}{\mathbb{F}}

\begin{abstract}
We investigate the configuration where a group of finite Morley rank acts definably and generically $m$-transitively on an elementary abelian $p$-group of Morley rank $n$, where $p$ is an odd prime, and $m\geqslant n$. We conclude that $m=n$, and the action is equivalent to the natural action of $\operatorname{GL}_n(F)$ on $F^n$ for some algebraically closed field $F$. This strengthens one of our earlier results, and partially answers two problems posed by Borovik and Cherlin in 2008. \end{abstract}

\maketitle

\section{Introduction}


This paper is the fourth, and the concluding work in a series of papers; the previous ones being  \cite{bbgeneric,bbpseudo,bbsharp}. All were aimed at
proving the following theorem, but they handled different stages of the proof, each using a completely different approach and technique.

\begin{theorem} Let $G$ be a group of \fmrd, $V$ an elementary abelian $p$-group of Morley rank $n$, and $p$ an odd prime. Assume that $G$ acts on $V$ faithfully, definably and generically $m$-transitively with $m \geqslant n$. Then $m=n$ and there is an \acf $F$ such that $V\simeq F^n$, $G\simeq\operatorname{GL}_n(F)$, and the action is the natural action. \label{maintheorem}
\end{theorem}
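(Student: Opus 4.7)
The proof has two stages: a reduction to the boundary case $m=n$, followed by recognition of the $(\operatorname{GL}_n(F), F^n)$-structure via pseudoreflections. For the reduction, note that generic $m$-transitivity with $m \geqslant n$ automatically yields generic $n$-transitivity: the projection to the first $n$ coordinates of the unique generic orbit in $V^m$ is a single $G$-orbit of rank $n^2$ in $V^n$, since the generic fibres of this projection have rank at most $(m-n)n$. Hence it suffices to prove the structural part of the theorem under the hypothesis $m=n$. The equality $m=n$ in the conclusion then follows \emph{a posteriori}: any $m' \geqslant n+1$ would require $\rk G \geqslant nm' > n^2$ by orbit-stabiliser, but the natural action of $\operatorname{GL}_n(F)$ on $F^n$ has $\rk G = n^2$, a contradiction.

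Assume now $m = n$. The plan is to build a definable field $F$ acting on $V$ by exhibiting pseudoreflections. Fix a generic tuple $(v_1, \ldots, v_{n-1}) \in V^{n-1}$ and let $S = G_{v_1, \ldots, v_{n-1}}$ be its pointwise stabiliser. Rank bookkeeping gives $\rk S \geqslant n$, and $S$ acts generically transitively on $V$. I would then isolate inside $S$ a definable, connected, infinite subgroup $T$ acting trivially on a definable hyperplane of $V$ and faithfully on a complementary generic line, so that its elements are genuine pseudoreflections. Feeding $T$ and its $G$-conjugates into the pseudoreflection-to-field dictionary developed in \cite{bbpseudo} produces a definable algebraically closed field $F$ of Morley rank $1$, an $F$-vector space structure on $V$ of $F$-dimension $n$, and a definable homomorphism $\varphi\colon G \to \operatorname{GL}_n(F)$. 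Faithfulness of the $G$-action makes $\varphi$ injective; the presence of the pseudoreflection subgroups inside $\varphi(G)$ together with $\varphi(G)$-transitivity on generic $F$-bases (inherited from generic $n$-transitivity of $G$ on $V$) forces $\varphi(G) = \operatorname{GL}_n(F)$.

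The principal obstacle lies in the second stage: extracting an honest pseudoreflection subgroup $T$ from the bare information that $S$ is a definable group of rank at least $n$ acting on $V$ with a single generic orbit. Connectedness and solvability of $T$, and above all the existence of a definable decomposition of $V$ into a $T$-fixed hyperplane and a complementary $T$-stable line, are delicate matters. This is where the machinery of the three earlier papers in the series enters in concert: the generic orbit analysis of \cite{bbgeneric}, the sharply transitive case treated in \cite{bbsharp}, and the field recognition theorem of \cite{bbpseudo} must all be applied in combination, and assembling them cleanly into a single argument is the main task of this concluding paper.
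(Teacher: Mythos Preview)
Your two-stage outline diverges substantially from the paper's argument, and the second stage has a real gap.

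The reduction to $m=n$ and the a posteriori bound via $\rk G = n^2$ are fine (the paper obtains $m=n$ differently, in Lemma~\ref{m=n}, from an elementary abelian $2$-group of order $2^m$ acting on $V$, but your route is valid). However, your Stage~2 proposes to locate a pseudoreflection subgroup directly inside a generic $(n-1)$-point stabiliser $S$ and then invoke \cite{bbpseudo}. You correctly flag this extraction as ``the principal obstacle,'' and it is: without knowing in advance that the generic $n$-point stabiliser $K$ is trivial, $S$ may have rank strictly greater than $n$ and arbitrarily bad structure, and there is no evident mechanism forcing it to contain an abelian subgroup acting as a genuine pseudoreflection. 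Your final paragraph defers this to a combination of \cite{bbgeneric}, \cite{bbsharp}, and \cite{bbpseudo}, but \cite{bbsharp} already \emph{assumes} generic sharpness (that is, $K=1$), so invoking it here is circular unless you first establish $K=1$.

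That is exactly what the paper does, by an entirely different route. Rather than hunting for pseudoreflections, it studies the setwise stabiliser $H$ of $\{\pm a_1,\ldots,\pm a_m\}$ for a generic tuple $\bar a$ and the pointwise stabiliser $K$ of $\bar a$, showing $H/K$ is the hyperoctahedral group $\Sigma_m = \mathbb{Z}_2^m \rtimes \operatorname{Sym}_m$. A Maschke-type count (Theorem~\ref{th:Maschke}) controls finite $p'$-subgroups of $K$ normalised by a Sylow $2$-subgroup $E\simeq \mathbb{Z}_2^m$ of the preimage of $\mathbb{Z}_2^m$, yielding $m=n$ and $N_H(E)\cap K=1$. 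From this one shows $K$ contains no good tori, hence is of $p$-unipotent type (Proposition~\ref{p-unipotent type}); an induction on $n$ exploiting the weight decomposition under $E$ and the small-rank classifications (Facts~\ref{deloro}, \ref{bordel}) then gives $K^\circ=1$, and a short endgame forces $K=1$. Only then is \cite{bbsharp} invoked as a black box to identify $(G,V)$ with $(\operatorname{GL}_n(F),F^n)$; the pseudoreflection machinery of \cite{bbpseudo} appears in the paper only in Proposition~\ref{maximality-of-GL}, which is used to pass from connected $G$ back to arbitrary $G$.
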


In~\cite{bbsharp}, the same theorem was proven under the extra assumption that the action of $G$ on $V$ is generically
{\em sharply} $m$-transitive. In this paper, we prove the generic sharpness of the action of $G$ on $V$  under the hypothesis of Theorem~\ref{maintheorem}. Then  Theorem~\ref{maintheorem} follows from the previous result  \cite[Theorem 1]{bbsharp}. We use the technique developed in \cite{borovik2020} for analysis of actions of certain subgroups of $G$ specifically for the needs of the present project, see Section \ref{subsec:Maschke}.

Theorem \ref{maintheorem} gives partial confirmations to the following two conjectures; note that the latter is implicit in the former.

\begin{conjecture} \cite[Problem 37, p. 536]{abc}, \cite[Problem 13]{borche} Let $G$ be a connected group of finite Morley rank
	acting faithfully, definably, and generically $n$-transitively on a connected
	abelian group $V$ of Morley rank $n$. Then $V$ has a structure of an  $n$-dimensional vector space over an algebraically closed field $F$ of Morley
	rank 1, and $G$ is $\operatorname{GL}_n(F)$ in its natural action on $F^n$.
	\end{conjecture}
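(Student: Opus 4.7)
The plan is to invoke the reduction already set up by the authors: by \cite[Theorem 1]{bbsharp}, it suffices to establish that the action of $G$ on $V$ is generically \emph{sharply} $m$-transitive under the hypotheses of Theorem~\ref{maintheorem}. Equivalently, writing $H = G_{\bar v}$ for the stabiliser in $G$ of a generic tuple $\bar v = (v_1, \dots, v_m) \in V^m$, the task is to show $H = 1$. By generic $m$-transitivity and a rank computation one has $\rk H = \rk G - mn$, so the goal decomposes into showing $\rk G = mn$ and $H^\circ = 1$.

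First I would unpack what $H$ fixes. Since $m \geqslant n$, the group $H$ pointwise fixes at least $n$ vectors $v_1, \dots, v_n$ that are generic in $V^n$. The heuristic from the linear model $G = \operatorname{GL}_n(F)$ acting on $V = F^n$ is that $n$ generic vectors form a basis and hence their stabiliser is trivial. The difficulty is that neither the linearity of the action nor a vector-space structure on $V$ is available a priori, so the first task is to manufacture enough ``linear'' structure to make a spanning-type argument go through.

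For this I would deploy the Maschke-style technique of Section~\ref{subsec:Maschke}, adapted from \cite{borovik2020}. The idea is to identify a definable (e.g.\ toral) subgroup of $H$, or of a carefully chosen nearby subgroup of $G$, whose action on $V$ admits a direct-sum decomposition of $V$ into definable invariant subgroups. Since $H$ fixes the generic vectors $v_1, \dots, v_m$, the projections of these vectors onto the summands are themselves generic, which severely constrains the action of $H^\circ$ on each summand: any non-trivial action would move some $v_i$ off itself, contradicting $v_i \in \mathrm{Fix}(H)$. Running this argument across all summands forces $H^\circ$ to act trivially on $V$, and faithfulness then yields $H^\circ = 1$. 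A separate rank-and-genericity argument disposes of the finite component group of $H$.

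The principal obstacle is the Maschke decomposition itself in the non-linear setting: in the absence of an a priori field action on $V$ one must extract complete reducibility from purely group-theoretic data using the pseudoreflection and toral machinery of \cite{borovik2020}. A secondary difficulty is controlling how the fixed generic vectors are distributed among the summands of $V$: pathological configurations in which some summand fails to meet the $v_i$ must be excluded by a dimension count, and this is precisely where the assumption $m \geqslant n$ enters decisively. Once generic sharpness has been established, \cite[Theorem 1]{bbsharp} completes the proof, forcing $m = n$ and identifying $(G, V)$ with $(\operatorname{GL}_n(F), F^n)$ in the natural action.
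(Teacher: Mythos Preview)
First, a bookkeeping point: the statement you were given is Conjecture~1, which the paper does \emph{not} prove in full; Theorem~\ref{maintheorem} handles only the special case where $V$ is an elementary abelian $p$-group with $p$ odd, and your proposal explicitly works under those hypotheses. I therefore compare your sketch against the paper's proof of Theorem~\ref{maintheorem}.

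Your overall reduction --- show that the pointwise stabiliser of a generic $m$-tuple is trivial and then invoke \cite[Theorem~1]{bbsharp} --- is exactly the paper's plan. The gap is in how you propose to trivialise that stabiliser. You write $H = G_{\bar v}$ and suggest finding a definable (``e.g.\ toral'') subgroup of $H$ whose action decomposes $V$ as a direct sum, then arguing that the fixed generic vectors project generically onto the summands. But the paper shows (Lemma~\ref{no-tori-in-K} and Proposition~\ref{p-unipotent type}) that this pointwise stabiliser --- called $K$ there --- contains \emph{no} good tori and is a torsion group of $p$-unipotent type; there is simply no torus inside $K$ to produce your decomposition, and no evident replacement from within $K$ alone. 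The idea you are missing is to enlarge to the \emph{setwise} stabiliser of $\{\pm a_1,\dots,\pm a_m\}$, whose quotient by $K$ is the hyperoctahedral group $\Sigma_m = \mathbb{Z}_2^m \rtimes \mathrm{Sym}_m$ (Lemma~\ref{hyperoctahedral}). A Sylow $2$-subgroup $E \simeq \mathbb{Z}_2^m$ lifted from this larger group supplies the weight decomposition $V = V_1 \oplus \cdots \oplus V_n$ (Fact~\ref{weightspaces}); the Maschke argument of Theorem~\ref{th:Maschke} is then applied not to $K$ but to finite ``essential'' subgroups containing $E$, yielding $m=n$ (Lemma~\ref{m=n}), after which an inductive centraliser-of-involution analysis inside $K^\circ \Sigma$ (Lemma~\ref{lemma:Q=1}) forces $K^\circ = 1$ and finally $K = 1$ (Lemma~\ref{lem:K-is-1}). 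Your ``projections of generic vectors are generic'' heuristic does not appear and would not obviously survive without the $\Sigma_m$-structure to organise the summands.
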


\begin{conjecture}  \cite[Problem 12]{borche}
Let $G$ be a connected group of finite Morley rank
	acting faithfully, definably, and generically $t$-transitively on an
	abelian group $V$ of Morley rank $n$. Then $t\leqslant n$.
\end{conjecture}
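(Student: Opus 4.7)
The overall strategy is to show that, under the hypothesis of Theorem~\ref{maintheorem}, the generically $m$-transitive action of $G$ on $V$ must already be generically \emph{sharply} $m$-transitive; once that is established, \cite[Theorem~1]{bbsharp} delivers the conclusion. Hence the entire proof reduces to verifying that the pointwise stabiliser $H := G_{\bar v}$ of a generic $m$-tuple $\bar v = (v_1, \ldots, v_m) \in V^m$ is trivial.

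The first step is a rank estimate. Assuming $H \neq 1$ and choosing $1 \neq h \in H$, each coordinate $v_i$ lies in $C_V(h)$; since $v_1$ is itself generic in $V$, this forces $\rk C_V(h) = n$. Thus $C_V(h)$ has finite index in $V$, its connected component coincides with $V^\circ$, and $h$ centralises $V^\circ$. Consequently every nontrivial element of $H$ centralises $V^\circ$, and by faithfulness of the action $H$ is finite and embeds into the finite automorphism group of $V/V^\circ$.

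The heart of the proof is to rule out such finite $H$ using the Maschke-type technique developed in Section~\ref{subsec:Maschke}. For $h \in H$ of order coprime to $p$, Maschke's theorem yields a decomposition $V = C_V(h) \oplus [V,h]$ with $[V,h]$ finite; combined with the rank identity $\rk G = mn$ (forced by $H$ finite) and with generic $m$-transitivity, this can be leveraged to contradict $m \geqslant n$ by comparing orbits on the finite complement to $V^\circ$. For $h \in H$ of order $p$, however, Maschke fails, the Jordan block structure of $h$ on $V$ is genuinely nontrivial, and the methods of \cite{borovik2020}, adapted to the present setting in Section~\ref{subsec:Maschke}, must be invoked to analyse $C_G(h)$ together with the residual $\langle h \rangle$-module structure on $V$. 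I expect this unipotent case to be the principal technical obstacle: unlike in the coprime-to-$p$ case, one cannot simply split off a fixed direct summand, so the contradiction must be extracted from a more delicate interplay between the centraliser of $h$ in $G$ and the rank estimates on the fixed-point set. Once $H = 1$ is established in every case, generic sharpness follows and \cite[Theorem~1]{bbsharp} completes the proof.
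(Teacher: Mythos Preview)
There is a fundamental gap in your opening rank estimate. You argue that if $1 \neq h \in H = G_{\bar v}$ then, since $v_1 \in C_V(h)$ and $v_1$ is generic in $V$, one must have $\rk C_V(h) = n$. This inference is invalid: genericity of $v_1$ does not force a definable set containing it to have full rank when that set is defined over a parameter---here $h$---which is chosen \emph{after} $\bar v$ and depends on it. For a concrete failure, $\operatorname{GL}_2(F)$ acts generically $1$-transitively on $F^2$, yet the stabiliser of any nonzero $v_1$ contains many $h$ with $C_V(h)$ a line of rank $1$. Taken at face value, your argument would show that every faithful generically transitive action is generically sharply transitive, which is false. Note also that $V$ is already connected (Fact~\ref{connectednessofV}), so your discussion of $H$ embedding into $\operatorname{Aut}(V/V^\circ)$ is vacuous: were the rank step valid, it would yield $H=1$ outright, with no finite residue to analyse.

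The paper's route to $K=1$ (its notation for your $H$) is entirely different. It passes to the \emph{setwise} stabiliser of $\{\pm a_1,\ldots,\pm a_m\}$, whose quotient by $K$ is the hyperoctahedral group $\Sigma_m$, and introduces \emph{essential} and \emph{ample} subgroups. Theorem~\ref{th:Maschke} is applied not to centralisers of single $p$-elements as you propose, but to finite essential $p'$-subgroups, showing that their intersection with $K$ vanishes (Lemma~\ref{key-Maschke}); this gives $m=n$, shows $K$ contains no good tori and is therefore of $p$-unipotent type, and then an inductive centraliser-of-involution argument (Lemma~\ref{lemma:Q=1}) together with Fact~\ref{smoothlyirreducible} kills $K^\circ$ and finally $K$. (As a side remark, the displayed statement is a conjecture; the paper establishes only the special case covered by Theorem~\ref{maintheorem}.)
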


 The cases when $V$ is a torsion free abelian group or an elementary abelian $2$-group require completely different approaches and  methods and are handled in our next paper. But even that result will not be the end of the story, since it appears to be an almost inevitable step in any proof of the following conjecture.

\begin{conjecture} \cite[Problem 36, p. 536]{abc}, \cite[Problem 9]{borche} \label{mainconjecture}
Let $G$ be a connected group of finite Morley rank acting faithfully, definably, transitively, and generically $(n+2)$-transitively on a set $\Omega$ of Morley rank $n$. Then the pair $(G,\Omega)$ is equivalent to the projective linear group $\pgl_{n+1}(F)$ acting on the projective space $\mathbb{P}^n(F)$ for some algebraically closed field $F$.
\end{conjecture}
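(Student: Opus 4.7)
The plan is to reduce Conjecture \ref{mainconjecture} to the linear case, Theorem \ref{maintheorem} (together with its promised analogues for torsion-free and characteristic-$2$ abelian groups), via the classical two-step deprojectivisation that first strips one point to obtain an affine action and then strips a second to obtain a linear action. First, fix a generic $\omega_0 \in \Omega$ and set $H = G_{\omega_0}^\circ$. Since the transitivity of $G$ on $\Omega$ gives $\rk G = \rk H + n$, a routine rank calculation shows that $H$ acts definably, faithfully, and generically $(n+1)$-transitively on $\Omega' := \Omega \setminus \{\omega_0\}$, which itself has Morley rank $n$.

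The heart of the argument is to exhibit a definable connected abelian normal subgroup $U \trianglelefteq H$ that acts regularly on $\Omega'$. In the model $G = \pgl_{n+1}(F)$, $U$ is the unipotent radical of the parabolic stabiliser $H$, isomorphic as an abstract group to $(F,+)^n$, and its regular action endows the chart $\Omega'$ with the translation structure of an affine space. Producing such a $U$ abstractly under only the finite Morley rank hypothesis is the main obstacle: a natural route is a Zilber-style generation argument based on the $2$-point stabilisers $G_{\omega_0,\omega_1}$ as $\omega_1$ varies, combined with unipotence theory for groups of \fmrd, in order to exhibit a definable closed abelian normal ``translation'' subgroup of $H$. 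One must then verify transitivity and freeness of $U$ on $\Omega'$, so as to identify $\Omega'$ with $U$ as $H$-sets.

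Once $U$ is available, fix $\omega_1 \in \Omega'$ and let $K = H_{\omega_1}$. Regularity of $U$ on $\Omega'$ gives $H = U \rtimes K$, and transferring the generic $(n+1)$-transitivity of $H$ across this regular action shows that $K$ acts on $U$ faithfully, definably, and generically $n$-transitively. When $U$ is an elementary abelian $p$-group with $p$ odd, Theorem \ref{maintheorem} applies directly and yields $U \simeq F^n$ and $K \simeq \operatorname{GL}_n(F)$ in the natural action for some algebraically closed field $F$ of Morley rank $1$; the two remaining possibilities for $U$, torsion-free and elementary abelian of exponent $2$, are covered by the authors' forthcoming companion paper. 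In every case, $H \simeq \operatorname{AGL}_n(F)$ acts as the full affine group on the chart $\Omega'$.

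Finally, recover $G$ from $H$. Pick any $g \in G$ carrying $\omega_0$ into $\Omega'$; then $G = \langle H, g \rangle$. Identifying $\Omega'$ with the standard affine chart of $\mathbb{P}^n(F)$, the definable extension of the $F$-interpretable affine $H$-action to $G$ must, by Zilber's field theorem and the rigidity of algebraic group structures over $F$, be $\pgl_{n+1}(F)$ in its natural action on $\mathbb{P}^n(F)$. The decisive technical difficulty in this scheme is unquestionably the extraction of the abelian regular normal subgroup $U$ in the second paragraph: Steps 1, 3 and 4 lie along well-trodden lines of the Borovik--Cherlin programme, while Theorem \ref{maintheorem} supplies exactly the input required at Step 3 to pass from affine to linear.
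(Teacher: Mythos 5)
The statement you are trying to prove is not a theorem of this paper: it is stated as an open conjecture (Problem 36 of Alt\i nel--Borovik--Cherlin, Problem 9 of Borovik--Cherlin), and the paper offers no proof of it. Theorem~\ref{maintheorem} is presented only as ``an almost inevitable step in any proof'' of the conjecture, and the authors explicitly say that even their forthcoming companion results ``will not be the end of the story.'' So there is no proof in the paper to compare yours against, and a complete argument here would be a genuinely new result.

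Your proposal correctly identifies the intended shape of the reduction --- the paper itself remarks that $F^n\rtimes \operatorname{GL}_n(F)$ is the stabiliser of a point in the action of $\pgl_{n+1}(F)$ on $\mathbb{P}^n(F)$, which is exactly why Theorem~\ref{maintheorem} is relevant --- but it is not a proof, and you concede as much. The decisive step, producing a definable connected abelian normal subgroup $U\trianglelefteq H$ acting regularly on $\Omega\setminus\{\omega_0\}$, is precisely the open content of the conjecture: nothing in the finite Morley rank hypotheses forces the generic point stabiliser to have such a ``translation subgroup,'' and no known unipotence or generation argument delivers it in this generality. Gesturing at ``a Zilber-style generation argument combined with unipotence theory'' does not close this gap; if it did, the conjecture would already be a theorem. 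Your final step is also understated: even granting $H\simeq F^n\rtimes\operatorname{GL}_n(F)$ acting affinely on a chart, recovering $G\simeq\pgl_{n+1}(F)$ is not a matter of ``rigidity'' applied to $\langle H,g\rangle$; it is a substantial generic identification theorem (this is the subject of the authors' paper \cite{bbgeneric} and of the Alt\i nel--Wiscons work \cite{altwis}, \cite{altwis2}, which settles only the case $n=2$). In short: your step 3 is where this paper's Theorem~\ref{maintheorem} would indeed plug in, but steps 2 and 4 are the open problems, not routine verifications.
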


Indeed, the group $F^n\rtimes \operatorname{GL}_n(F)$ is the stabiliser of a point in the action of $\pgl_{n+1}(F)$ on  $\mathbb{P}^n(F)$.

Alt\i nel and Wiscons have already made important contributions towards a solution to the above conjecture in \cite{altwis} and \cite{altwis2}. The importance of Conjecture \ref{mainconjecture} has been recently highlighted in \cite{Freitag-Moosa2021}.

General discussion and a survey of results on actions of groups of finite Morley rank can be found in \cite{Borovik-Deloro2019}.
Terminology and notation follow \cite{abc}, \cite{bn} and \cite{borche}.

\section{Useful Facts}

In what follows, $(G,X)$ is an infinite permutation group of \fmrd.

\begin{definition} Let $Y$ be a definable subset of $X$. If $\rk(X\setminus Y)<\rk(X)$ then $Y$ is called a \textit{strongly generic} subset of $X$. We will simply call it a \textit{generic subset}. If $G$ acts transitively on a generic subset of $X$, then we say $G$ acts \textit{generically transitively} on $X$.
If the induced action of $G$ on $X^n$ is generically transitive, then we say 	$G$ acts \textit{generically $n$-transitively} on $X$.\end{definition}

The following two facts show that connectedness assumptions are superfluous in our context.

\begin{fact}\label{connectednessofV}

If $G$ acts generically $m$-transitively on a group $X$, where $m\geqslant\rk(X)$, then $X$ is a connected group.
\end{fact}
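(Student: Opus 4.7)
The plan is to derive a contradiction from the combinatorial structure of the induced $G$-action on the irreducible components of $X^m$. Suppose for contradiction that $X$ is not connected, and set $d := [X : X^\circ] \geqslant 2$. The cosets of $X^\circ$ are precisely the $d$ irreducible components of $X$ --- each of rank $n = \rk(X)$ and Morley degree $1$ --- and any definable bijection of $X$ must permute them, since it preserves both rank and Morley degree. Hence the $G$-action on $X$ descends to a definable action on the finite set $\Omega := X/X^\circ$.

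Lifting to $X^m$, the $d^m$ cosets of $(X^\circ)^m$ are exactly the irreducible components of $X^m$, each of rank $mn$ and Morley degree $1$, and $G$ permutes them via the diagonal action on $\Omega^m$. Let $O \subseteq X^m$ be the generic $G$-orbit, satisfying $\rk(X^m \setminus O) < mn$. If $O$ missed any irreducible component of $X^m$, that entire component would lie in $X^m \setminus O$, forcing $\rk(X^m \setminus O) \geqslant mn$ and contradicting strong genericity. Hence $O$ meets every irreducible component, and consequently $G$ acts transitively on $\Omega^m$.

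The main obstacle, and the decisive combinatorial step, is then that a diagonal action on $\Omega^m$ is never transitive once $|\Omega| \geqslant 2$ and $m \geqslant 2$: the set of constant tuples $\{(\omega, \ldots, \omega) : \omega \in \Omega\}$ is $G$-invariant and has size exactly $d$, while $|\Omega^m| = d^m$. This forces $d = 1$, i.e., $X$ is connected. The hypothesis $m \geqslant \rk(X)$ delivers $m \geqslant 2$ whenever $\rk(X) \geqslant 2$, covering the cases of interest for the main theorem; the residual low-rank case is handled by first passing to $G^\circ$ so that the generic orbit acquires Morley degree $1$ and must sit inside a single irreducible component of $X^m$, after which the same rank argument forces $d^m = 1$.
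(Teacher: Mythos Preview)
Your argument for $m \geqslant 2$ is essentially correct and more self-contained than the paper's, which simply cites \cite[Lemma~1.8]{borche}. One caveat: your justification that ``any definable bijection of $X$ must permute the cosets of $X^\circ$, since it preserves both rank and Morley degree'' is not valid as stated. A degree-$1$ decomposition of a definable set is not canonical, and an arbitrary definable bijection need not respect any particular one. What makes the cosets of $X^\circ$ canonical is the group structure, and the correct reason $G$ permutes them is that $G$ acts by group automorphisms (so $X^\circ$ is $G$-invariant as a characteristic subgroup). This hypothesis is implicit in the paper's setting---its own $m=1$ proof uses that $G$ fixes $X^\circ$ setwise---so your conclusion stands, but the reasoning needs to be corrected.

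The $m=1$ case, however, has a genuine gap. You propose to pass to $G^\circ$ ``so that the generic orbit acquires Morley degree~$1$,'' but this begs the question: when $d = [X:X^\circ] \geqslant 2$, the cofinite $G$-orbit $O$ has degree $d$, and under $G^\circ$ it splits into $d$ degree-$1$ orbits, \emph{none} of which is generic in $X$ (each has complement of full rank). So $G^\circ$ need not act generically transitively, and the rank argument you invoke does not apply. The paper's proof for $m=1$ avoids this by arguing directly: the cofinite orbit $A$ meets $X^\circ$ (since $X^\circ$ has full rank), and since $G$ preserves both $A$ and $X^\circ$ while acting transitively on $A$, one gets $A \subseteq X^\circ$, whence $X = X^\circ$. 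An equally short fix in your framework: $G$ is transitive on $\Omega = X/X^\circ$ by your component argument, but $G$ also fixes the coset $X^\circ \in \Omega$ (acting by automorphisms), forcing $|\Omega|=1$.
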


\begin{proof}
 If $m\geqslant 2$ then, this is a special case of \cite[Lemma 1.8]{borche}. When $m=1$, note that the generic orbit, say $A\subseteq X$, is cofinite in $X$. Since $G$ fixes $X^\circ$ and $A$ setwise, $G$ also fixes $X^\circ\cap A$ setwise. The transitivity of $G$ on $A$ implies $A\subseteq X^\circ$. Hence $X=X^\circ$, since $A$ is cofinite.
\end{proof}

\begin{fact}\label{connectedG}{\rm \cite[Lemma 4.10]{altwis}}
If $G$ acts $n$-transitively on $X$, and $X$ is of degree $1$, then $G^\circ$ also acts $n$-transitively on $X$.
\end{fact}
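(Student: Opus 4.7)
The natural approach is induction on $n$. For the base case $n=1$, the connected component $G^\circ$ is a definable normal subgroup of finite index in $G$, so its orbits on $X$ are definable and are permuted by the finite group $G/G^\circ$. Since $G$ acts transitively, these orbits are permuted transitively, hence share the same Morley rank and degree; as their union is $X$, each has rank $\rk(X)$. If there were two or more such orbits, $X$ would decompose as a disjoint union of two or more definable subsets of full rank, contradicting $\ddeg(X)=1$. Hence $G^\circ$ is transitive on $X$.

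For the inductive step with $n\geqslant 2$, fix $x\in X$. Since $G$ is $n$-transitive on $X$, the stabilizer $G_x$ is $(n-1)$-transitive on $X\setminus\{x\}$, and $X\setminus\{x\}$ still has degree $1$ (the $\rk(X)=0$ case being trivial since then $X$ is a single point and $n=1$). Applying the induction hypothesis to $(G_x, X\setminus\{x\})$ yields that $(G_x)^\circ$ acts $(n-1)$-transitively on $X\setminus\{x\}$. Since $(G_x)^\circ$ is the unique connected definable subgroup of finite index in $G_x$, and $G^\circ\cap G_x$ is also of finite index in $G_x$, we have $(G_x)^\circ\leqslant G^\circ\cap G_x=(G^\circ)_x$, so $(G^\circ)_x$ likewise acts $(n-1)$-transitively on $X\setminus\{x\}$.

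The $n$-transitivity of $G^\circ$ now follows by a standard Frattini-type argument: given two $n$-tuples in $X$, one first uses the transitivity of $G^\circ$ on $X$ (extracted from the base case) to align their first coordinates, and then uses $(G^\circ)_x$ to align the remaining $n-1$ coordinates. The only delicate points in the plan are the preservation of degree $1$ under removal of a point and the inclusion $(G_x)^\circ\leqslant (G^\circ)_x$; both are formal consequences of the definitions and the uniqueness of the connected component, so I would not expect any real obstacle — the argument is essentially bookkeeping about how $\circ$ interacts with point stabilizers and with rank-degree decompositions.
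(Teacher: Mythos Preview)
The paper does not supply its own proof of this fact; it is quoted directly from \cite[Lemma~4.10]{altwis} and used as a black box, so there is nothing in the paper to compare against. Your argument is correct and is essentially the standard one: the base case is the usual degree-$1$ orbit-counting (the $G^\circ$-orbits are in definable bijection via $G/G^\circ$, hence share rank and degree, and additivity of degree over a disjoint union forces a single orbit), and the inductive step via $(G_x)^\circ \leqslant (G^\circ)_x$ together with a Frattini-style reassembly is exactly how this lemma is proved in the literature. In the paper's application only the case $n=1$ is actually invoked (with $X$ the generic $G$-orbit in $V^m$, which inherits degree~$1$ from the connected group $V^m$), so your base case alone already covers what is needed here.
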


For any prime $p$, recall that a connected solvable $p$-group of bounded exponent is called a \textit{$p$-unipotent} group, and a divisible abelian $p$-group is called a \textit{p-torus}.

As the following two facts show, the structure of Sylow 2-subgroups  in groups of \fmrd\ is well understood.

\begin{fact}\label{sylow2}{\rm \cite[Propositions I.6.11, I.6.4, I.6.2]{abc}} Sylow $2$-subgroups of a group of \fmr are conjugate. Moreover, if $S$ is a Sylow $2$-subgroup of a group of finite Morley rank, then $S^\circ=U*T$, where $U$ is a definable $2$-unipotent group, and $T$ is a $2$-torus. In particular, Sylow $2$-subgroups in groups of finite Morley rank are locally finite.
\end{fact}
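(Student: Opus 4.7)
The plan is to establish the three assertions in order: conjugacy of Sylow $2$-subgroups, the central-product decomposition of the connected component, and local finiteness. The framework is that of Borovik--Poizat, and the argument proceeds through the machinery of connected nilpotent groups of finite Morley rank.

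First, for conjugacy: I would show that the connected component $S^\circ$ of any Sylow $2$-subgroup $S$ is nilpotent. One uses the fact that in a group of finite Morley rank, a definable connected $p$-subgroup is nilpotent (an instance of the descending-chain condition plus Zilber-style indecomposability arguments applied to normalisers of ascending subgroup chains). Once $S^\circ$ is known to be nilpotent, one establishes conjugacy of the connected parts by a generic-element argument: translates of a fixed $S^\circ$ cover a subset of the torsion locus whose rank can be computed, and comparing it to the corresponding set for another $S_1^\circ$ forces the two to meet in a subgroup of the same rank; a Frattini-style lift then promotes conjugacy of $S^\circ$ to conjugacy of the full Sylow.

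Second, for the decomposition $S^\circ = U * T$: I would apply the structure theorem for connected nilpotent groups of finite Morley rank, which says that any such group decomposes as a central product $D * B$ of a divisible subgroup $D$ and a definable subgroup $B$ of bounded exponent, with $D\cap B$ finite and central. Specialising to our connected nilpotent $2$-group $S^\circ$, the divisible factor is a $2$-torus (being the definable hull of the $2$-torsion in $D$) and the bounded-exponent factor is a definable $2$-unipotent subgroup, yielding the asserted form.

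Third, for local finiteness: a $2$-torus $T$ of finite Morley rank is isomorphic to a direct sum of finitely many copies of the Pr\"ufer $2$-group (since each summand contributes positive rank), hence locally finite. For the $2$-unipotent part $U$, one argues by induction on the nilpotency class: abelian groups of bounded exponent are locally finite, and a central extension of a locally finite group by a locally finite group of bounded exponent remains locally finite. The central product $U*T$ is therefore locally finite, and since $S/S^\circ$ is finite, so is $S$.

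The main obstacle is the first step. Conjugacy of Sylow $2$-subgroups in the finite Morley rank category is far from formal: unlike in algebraic groups, one has no ambient geometry to lean on, and the counting argument requires delicate control over how conjugates of $S^\circ$ cover generic involutions, together with the nontrivial input that connected definable $2$-subgroups are nilpotent. Once conjugacy is in hand, the decomposition and local finiteness are essentially structural consequences of the Nesin decomposition theorem and the classification of $2$-tori.
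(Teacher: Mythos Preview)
The paper does not prove this statement at all: it is recorded as a Fact with citations to \cite[Propositions I.6.11, I.6.4, I.6.2]{abc}, and no argument is supplied. So there is nothing in the paper to compare your proposal against; you have attempted to reconstruct the content of the cited propositions, which lies outside the scope of what the present paper establishes.

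That said, your sketch is broadly in line with how these results are actually proved in \cite{abc} and \cite{bn}. The structure theorem for connected nilpotent groups (Fact~\ref{structure-of-nilpotent} in this paper) does yield the $U*T$ decomposition once nilpotency of $S^\circ$ is known, and local finiteness follows as you indicate. Your first step, however, understates the difficulty and slightly misorders the logic: in the standard treatment one first establishes the structure of locally finite $2$-subgroups and the $U*T$ decomposition of maximal connected $2$-subgroups, and only then proves conjugacy, using genericity arguments on the torus part together with a Frattini argument. The ``generic covering'' heuristic you describe is the right flavour, but the actual proof in \cite{abc} routes through control of maximal $2$-tori and their normalisers rather than a direct rank count on translates of $S^\circ$. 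For the purposes of this paper, though, none of this needs to be reproduced: the correct move is simply to cite the result, as the authors do.
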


\begin{fact}\label{degenerate}{\rm \cite{bbcdegenerate}, \cite[Theorem IV.4.1]{abc}} Sylow $2$-subgroups of a connected group of \fmr are either trivial or infinite.
\end{fact}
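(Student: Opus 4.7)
The plan is to argue by contradiction: suppose $G$ is a connected group of \fmr with a nontrivial finite Sylow $2$-subgroup $S$. By the conjugacy of Sylow $2$-subgroups in Fact~\ref{sylow2}, every involution of $G$ is $G$-conjugate into $S$, so the set $I$ of involutions of $G$ is a finite (hence definable, Morley rank zero) subset of $G$.

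The first step is a rank argument on centralizers. For any $i \in I$, the conjugacy class $i^G$ lies inside $I$ and thus has Morley rank zero, so from $\rk(i^G) = \rk(G) - \rk(C_G(i))$ we obtain $\rk(C_G(i)) = \rk(G)$, whence $C_G(i)^\circ = G$ by the connectedness of $G$. Consequently every involution of $G$ lies in $Z(G)$.

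Next I would split according to whether some involution lies in $Z(G)^\circ$. If so, then $Z(G)^\circ$ is a connected abelian group of \fmr whose torsion part is divisible; its $2$-component is then a nontrivial $2$-torus, which is infinite and contains infinitely many involutions, contradicting $|I| < \infty$. Hence all involutions must lie in $Z(G) \setminus Z(G)^\circ$, producing a nontrivial $2$-element in the finite quotient $Z(G)/Z(G)^\circ$.

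The main obstacle is exactly this last configuration: ruling out a connected group of \fmr whose $2$-elements are all absorbed into a finite quotient of the center. Elementary rank arithmetic and centralizer computations cannot resolve this, since iteration on $G/Z(G)^\circ$ does not directly preserve the centralizer structure of involutions. This is precisely the substantive content of \cite{bbcdegenerate} and requires the Borovik-Burdges-Cherlin machinery of $0$-unipotent and $p$-unipotent subgroups, a fine analysis of the Fitting subgroup, and a delicate minimal counterexample argument. In practice I would appeal to that theorem as stated rather than try to reprove it here.
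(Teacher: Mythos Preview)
The paper does not give a proof of this statement: it is recorded as a Fact with bare citations to \cite{bbcdegenerate} and \cite[Theorem~IV.4.1]{abc}. So there is no proof in the paper to compare your proposal against; the authors treat it as a black box, which is exactly what you end up doing as well.

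Your preliminary reduction is largely correct but contains one slip. You claim that the torsion part of the connected abelian group $Z(G)^\circ$ is divisible. This is false in general: by Fact~\ref{structure-of-nilpotent}, $Z(G)^\circ = D \times B$ with $D$ divisible and $B$ connected of bounded exponent, and ${\rm Tor}(Z(G)^\circ) = {\rm Tor}(D) \oplus B$ need not be divisible when $B \ne 1$. Your intended conclusion nevertheless holds: if an involution lies in $D$, the $2$-torsion of $D$ is a nontrivial $2$-torus; if it lies in $B$, the $2$-primary component $B_2$ is a nontrivial connected abelian $2$-group of bounded exponent, and the squaring map on $B_2$ shows that $\Omega_1(B_2)$ is infinite. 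Either way there are infinitely many involutions, contradicting finiteness of $I$.

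Your acknowledgement that the residual configuration (all involutions central but lying outside $Z(G)^\circ$) cannot be eliminated by elementary means, and requires the full analysis of \cite{bbcdegenerate}, is accurate. That is precisely why the paper records the result as a cited Fact rather than attempting any argument.
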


The following is a structure theorem for nilpotent groups of \fmrd.

\begin{fact} \label{structure-of-nilpotent} \cite[Theorem 6.8]{bn}
	Let $G$ be a nilpotent group of finite Morley rank. Then $G$ is
	the central product $D * B$ where $D$ and $B$ are definable characteristic subgroups
	of $G$, $D$ is divisible, $B$ has bounded exponent.
\end{fact}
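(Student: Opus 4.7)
The plan is induction on the nilpotency class $c$ of $G$, with the abelian base case handled directly and the inductive step lifting the decomposition across the center.

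For the abelian case, let $G$ be abelian of finite Morley rank and set $D = \bigcap_{n \geqslant 1} nG$. Each $nG$ is a definable subgroup, and the descending chain condition on definable subgroups (automatic in finite Morley rank) forces this intersection to stabilize after finitely many steps, making $D$ definable; intrinsically defined, $D$ is also characteristic, and by construction it is the maximal divisible subgroup of $G$. One checks that $G/D$ has bounded exponent: otherwise $G/D$ would contain a nontrivial divisible element, which by divisibility of $D$ would lift to a divisible element of $G$ outside $D$, contradicting maximality. Setting $N = \exp(G/D)$, divisibility of $D$ makes it a direct summand of the abelian $G$, and the group $B := \{ g \in G : g^N = 1 \}$ is then definable, characteristic, of bounded exponent, and complementary to $D$.

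For the inductive step, take $c \geqslant 2$, let $Z = Z(G)$, and $\pi : G \to G/Z$. By induction, $G/Z = \bar D * \bar B$ in the required form, and the abelian case applied to $Z$ yields $Z = Z_D \oplus Z_B$. Set $H = \pi^{-1}(\bar D)$ and $K = \pi^{-1}(\bar B)$. The goal is to produce a definable characteristic divisible subgroup $D \leqslant H$ with $\pi(D) = \bar D$, and symmetrically a definable characteristic bounded-exponent subgroup $B \leqslant K$ with $\pi(B) = \bar B$, and then to verify $G = D * B$.

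The main obstacle is producing these lifts. In a non-abelian setting, the naive candidate $\bigcap_n nH$ need not be a subgroup, because $n$-th powers and commutators interact nontrivially through $Z$. I would address this using the Hall--Petresco commutator identities together with the central series of $G$ to show that in any nilpotent group of finite Morley rank, the set of divisible elements forms a definable characteristic subgroup, and dually the elements annihilated by a fixed exponent form such a subgroup. Surjectivity $\pi(D) = \bar D$ then uses the splitting $Z = Z_D \oplus Z_B$: given $\bar d \in \bar D$, any lift $h \in H$ has an $n$-th root in $G$ modulo $Z$ for every $n$, and the ambiguity lies in the coset space $Z/Z_D \cong Z_B$, which has bounded exponent, so for $n$ sufficiently divisible we can correct $h$ by an element of $Z_B$ to make it genuinely divisible. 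The decomposition $G = DB$ follows from $G = HK = (D Z_B)(Z_D B)$ and $Z \leqslant DB$. Finally $[D,B] \leqslant Z$ by construction, and this commutator is simultaneously divisible (in its first argument) and of bounded exponent (in its second), hence trivial in the abelian group $Z$, so $G = D * B$ as required.
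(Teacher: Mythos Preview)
The paper does not give a proof of this statement at all: it is recorded as a \emph{Fact} with a bare citation to \cite[Theorem 6.8]{bn}, so there is nothing to compare your argument against. Your proposal is an attempt to reconstruct the Borovik--Nesin proof rather than to reproduce anything in the present paper.

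On the substance of your sketch: the abelian base case is essentially right, but two points deserve care. First, once the chain stabilises at $D = NG$, the quotient $G/D = G/NG$ is visibly killed by $N$, so your detour through ``a nontrivial divisible element'' is unnecessary and, as stated, not quite a valid inference (unbounded exponent does not by itself produce a divisible element). Second, your $D$ and $B$ need not be \emph{complementary} in the direct-sum sense: if $G$ is divisible with torsion then $D = G$ while $B$ is nontrivial. What you actually get, and what the statement asks for, is only the central product $G = D*B$, i.e.\ $G = DB$ with $[D,B]=1$; for that, the argument $g^N = d^N$, $gd^{-1}\in B$ suffices.

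The inductive step is only a plan. You correctly identify the obstacle (powers need not form a subgroup once $G$ is non-abelian) and name the right tool (Hall--Petresco collection), but the passage from ``I would address this'' to an actual definable characteristic divisible $D$ with $\pi(D)=\bar D$ is where the genuine work in \cite{bn} lies, and your surjectivity argument via correcting by $Z_B$ is not yet a proof: you need to show that the corrected element is divisible in $G$, not merely that each individual root equation can be solved. As written this is a credible outline but not a proof.
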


We gather below some facts about solvable groups of \fmr which will be used in our proof of Theorem \ref{maintheorem}.

\begin{fact}  \label{commutator-nilpotent}  \label{structure-solvable}
	Let $M$ be a connected solvable group of finite Morley rank. Then the following holds.\\
{\rm (a)} The commutator subgroup $[M,M]$ is connected and nilpotent.\\
{\rm (b)} The group $M$ can be written as a product	$M = [M,M] C$, where $C$ is a connected nilpotent subgroup.\\
{\rm (c)} If $M$ is  of bounded exponent, then $M$ is nilpotent.
	\label{solvnilp}
\end{fact}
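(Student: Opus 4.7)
The plan is to handle the three assertions separately, drawing on standard structural results for groups of finite Morley rank.

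For (a), I would get connectedness of $[M,M]$ from Zilber's indecomposability theorem. For each $g\in M$ the set $\{[g,x]:x\in M\}$ is the image of the connected set $M$ under the definable map $x\mapsto [g,x]$ and contains the identity, so it is an indecomposable connected definable set; the family of such sets generates $[M,M]$, so $[M,M]$ is definable and connected. Nilpotency of $[M,M]$ I would prove by induction on the derived length of $M$. If $A$ is the last nontrivial term of the derived series of $M$ (a connected abelian definable normal subgroup), the inductive hypothesis gives that $[M,M]/A$ is nilpotent, and one concludes by a Lie--Kolchin style analysis: the action of the connected solvable group $M$ on the connected abelian group $A$ is by automorphisms, and $[M,M]$ acts unipotently on $A$ (iterated commutators with $A$ eventually vanish), which combined with nilpotency of $[M,M]/A$ yields nilpotency of $[M,M]$.

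For (b), I would invoke the existence of Carter subgroups in connected solvable groups of finite Morley rank, due to Fr\'econ: $M$ contains a self-normalising connected definable nilpotent subgroup $C$. The image of $C$ in the connected abelian quotient $M/[M,M]$ is itself a Carter subgroup of that quotient, and a Carter subgroup of a connected abelian group is the whole group. Hence $C\cdot[M,M]=M$.

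For (c), I would combine (a) with the structure theorem for nilpotent groups (Fact~\ref{structure-of-nilpotent}). By (a) the group $[M,M]$ is connected nilpotent, so it decomposes as $D\ast B$ with $D$ divisible and $B$ of bounded exponent; since $M$ itself has bounded exponent, $D$ is a divisible abelian group of bounded exponent, hence trivial, so $[M,M]$ is already of bounded exponent. Then $M/[M,M]$ is a connected abelian group of bounded exponent, and one argues that $M$ decomposes as a (central) product of its definable Sylow $p$-subgroups; each such Sylow subgroup is $p$-unipotent, hence nilpotent, and in the connected solvable setting distinct Sylow subgroups for distinct primes centralise one another, so $M$ itself is nilpotent.

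The main obstacle I expect is in (c): one must show that in a connected solvable group of finite Morley rank of bounded exponent, the various Sylow $p$-subgroups actually centralise each other, which is not automatic. This rests on the fact that in a connected solvable group of finite Morley rank each $p$-unipotent Sylow subgroup is characteristic and definable, and the push-up and normaliser arguments then force the required commutation. The nilpotency steps in (a) and (c) are essentially the same kind of argument, used to propagate nilpotency across a normal series with well-controlled factors.
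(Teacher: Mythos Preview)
The paper's own ``proof'' is just three citations (\cite[Theorem~6.8]{bn}, \cite[Corollary~I.8.30]{abc}, \cite[Lemma~I.5.5]{abc}), so there is no detailed argument to compare against; your outlines for (a) and (b) are essentially the standard ones behind those references.

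Two points where your sketch needs tightening.

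In (a), the assertion that $[M,M]$ acts unipotently on the abelian normal subgroup $A$ is correct, but it is not a ``Lie--Kolchin style'' fact: there is no flag theorem in the finite Morley rank setting. The actual input is Zilber's field theorem, in the form of Fact~\ref{minimalaction}: on each $M$-minimal section of $A$ the connected solvable group $M$ acts through an abelian quotient, so $M'$ acts trivially there. Chaining this along a finite $M$-invariant series in $A$ with minimal factors gives that iterated commutators $[A,M',\dots,M']$ eventually vanish, and together with the inductively known nilpotency of $M'/A$ this yields nilpotency of $M'$.

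In (c), the obstacle you flag is a genuine gap, and the Sylow-decomposition route is not how the result is proved. The standard argument bypasses Sylow theory and again goes through minimal sections. Refine a connected definable normal series of $M$ so that each factor $W$ is abelian and $M$-minimal; since $M$ has bounded exponent, each such $W$ is an elementary abelian $q$-group for a single prime $q$. By Fact~\ref{minimalaction} the quotient $M/C_M(W)$ is abelian, and by Fact~\ref{zilber} it embeds definably in $K^*$ for an algebraically closed field $K$. But $M/C_M(W)$ is connected and of bounded exponent, while $K^*$ has only finitely many elements of any given order; hence $M/C_M(W)$ is finite and therefore trivial. Thus every factor of the series is central in $M$, so $M$ is nilpotent. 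Your proposed commutation of distinct $p$-unipotent subgroups would, if pushed through, end up using exactly this argument (applied to the action of one on a minimal section of the other), so it is not really an independent route.
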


\begin{proof} They follow from \cite[Theorem 6.8]{bn}, \cite[Corollary I.8.30]{abc}, \cite[Lemma I.5.5]{abc}, respectively.
	\end{proof}

Next, we list some results about various configurations where groups act on groups.

\begin{fact}{\rm \cite[Fact 2.12]{bbsharp}} \label{weightspaces}
Let $V$ be a connected abelian group and $E$ an elementary abelian $2$-group of order $2^m$ acting definably and faithfully on $V$. Assume $m\geqslant n=\rk(V)$ and $V$ contains no involutions.
Then $m = n$ and $V = V_1 \oplus\cdots\oplus V_n$, where
\bi
\item[{\rm (a)}]
every subgroup $V_i$, $i = 1,\dots,n$, is connected, has Morley rank $1$ and is $E$-invariant.
\ei
Moreover,
\bi
\item[{\rm (b)}] each $V_i$,  $i = 1,\dots,n$, is a weight space of $E$, that is, there exists a {non-trivial} homomorphism $\rho_i:E\to \{\pm 1\}$ such that
     \[
     V_i = \{ v \in V \mid v^e = \rho_i(e)\cdot v \mbox{ for all }   e \in E\}.
     \]

\ei
\end{fact}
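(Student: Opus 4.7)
The plan is to simultaneously diagonalise the commuting involutions of $E$ on $V$ and read off all three conclusions by a rank count plus a faithfulness argument. First I would establish that $V$ is $2$-divisible with unique halves: since $V$ has no involutions, the doubling map $v \mapsto 2v$ is an injective definable endomorphism of the connected group $V$, so its image is a definable connected subgroup of the same rank, hence equals $V$; uniqueness of halves follows from the triviality of the $2$-torsion. For a single involution $e \in E$, this yields the eigenspace decomposition $V = V^+(e) \oplus V^-(e)$ via the identity $v = \tfrac{1}{2}(v + ev) + \tfrac{1}{2}(v - ev)$, where $V^\pm(e)$ is precisely the image of the definable endomorphism $v \mapsto v \pm ev$ of $V$. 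Crucially, this presentation as an image of a homomorphism from a connected group guarantees that both $V^+(e)$ and $V^-(e)$ are themselves connected, and they are $E$-invariant because $E$ is abelian.

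Next I would iterate this on a basis $e_1, \ldots, e_m$ of $E$. Each $V^\pm(e_1)$ is again a connected abelian definable group of finite Morley rank without involutions, so the same decomposition applies with $e_2$ in place of $e_1$, and so on. After $m$ steps, this produces the simultaneous weight space decomposition
\[
V \;=\; \bigoplus_{\chi} V_\chi, \qquad V_\chi \;=\; \{v \in V \mid ev = \chi(e)\,v \text{ for all } e \in E\},
\]
indexed by homomorphisms $\chi \colon E \to \{\pm 1\}$, with each nonzero $V_\chi$ connected and $E$-invariant.

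The finish is a counting argument. Let $S = \{\chi : V_\chi \neq 0\}$. Each $V_\chi$ with $\chi \in S$ is connected and nontrivial, so has rank at least $1$, giving $|S| \leq \sum_{\chi \in S}\rk V_\chi = n$. Faithfulness of $E$ means no nonidentity $e$ acts trivially on every $V_\chi$; equivalently, the evaluation map $E \to \{\pm 1\}^S$, $e \mapsto (\chi(e))_{\chi \in S}$, is injective, so $m = \log_2 |E| \leq |S|$. Combined with the hypothesis $m \geq n$, this chain forces $m = n = |S|$ and $\rk V_\chi = 1$ for every $\chi \in S$. Finally, if the trivial character belonged to $S$ then the injection from $E$ into $\{\pm 1\}^S$ would miss one coordinate, contradicting $|E| = 2^{|S|}$; hence every $\rho_i$ indexing a weight space is nontrivial. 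The only step needing care is preserving connectedness of the weight spaces through the inductive decomposition, which is handled cleanly by writing each eigenspace as an image (not merely a kernel) of a definable endomorphism, using $2$-divisibility.
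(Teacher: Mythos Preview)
Your argument is correct and is precisely the standard simultaneous-diagonalisation proof: use $2$-divisibility of $V$ to split off $\pm 1$-eigenspaces for each involution, iterate over a basis of $E$, and finish with the rank/faithfulness count to force $m=n$, rank-$1$ weight spaces, and nontriviality of the characters. The paper itself gives no proof here but simply cites \cite[Fact~2.12]{bbsharp} and \cite[Lemma~7.1]{bbpseudo}; what you have written is essentially a self-contained version of the argument those references carry out, so your proposal is both correct and faithful to the intended proof.
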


\begin{proof} Statements can be found in \cite{bbsharp}, whose proofs refer to \cite[Lemma 7.1]{bbpseudo}.
	\end{proof}

Assume that $G$ acts on a group $V$ such that the only infinite definable invariant subgroup of $V$ is itself under this action, then we say $G$ acts on $V$ \textit{minimally}, or $V$ is \textit{$G$-minimal}.

\begin{fact}{\rm \cite[Proposition 2.18]{bbsharp}} \label{smoothlyirreducible}
	Let $V$ be a connected abelian group and $\Sigma=\mathbb Z_2^m \rtimes {\rm Sym}_m$ act definably and faithfully on $V$. Assume $m\geqslant \rk(V)$ and $V$ contains no involutions. Then $\Sigma$ acts on $V$ minimally.
	\end{fact}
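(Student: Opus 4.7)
The plan is to apply Fact~\ref{weightspaces} to the elementary abelian $2$-subgroup $E=\mathbb{Z}_2^m\leqslant\Sigma$ and then use the $\operatorname{Sym}_m$-factor to glue the resulting weight-space decomposition into a proof of minimality.

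Let $W\leqslant V$ be an arbitrary infinite definable $\Sigma$-invariant subgroup; we aim to force $W=V$. Passing to the (characteristic, hence $\Sigma$-invariant) connected component $W^\circ$, we may assume $W$ is connected. Because $V$ has no involutions, the definable endomorphism $x\mapsto 2x$ of $V$ has trivial kernel; by connectedness and a rank count it is surjective, hence an automorphism, and the same argument shows $W$ is $2$-divisible. Fact~\ref{weightspaces} then yields $m=n$ together with a decomposition $V=V_1\oplus\cdots\oplus V_n$ into connected rank-one $E$-weight spaces with non-trivial characters $\rho_i\in\hat{E}$. Faithfulness of $E$ on $V$ makes the joint character map $E\to\{\pm 1\}^n$ injective, hence (by cardinality) bijective, so $\{\rho_1,\ldots,\rho_n\}$ is an $\mathbb{F}_2$-basis of $\hat{E}$; in particular the $\rho_i$ are pairwise distinct. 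Using $2$-divisibility of $W$, the usual averaging projections $\pi_i=\tfrac{1}{|E|}\sum_{e\in E}\rho_i(e^{-1})e$ act within $W$, and we obtain $W=\bigoplus_i(W\cap V_i)$. Since each $V_i$ is connected of Morley rank one, the intersection $W\cap V_i$ is either finite or equal to $V_i$; the set $I=\{i:V_i\leqslant W\}$ is then nonempty because $W$ is infinite.

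To finish, I would deduce $I=\{1,\ldots,n\}$ from transitivity of the $\operatorname{Sym}_m$-action on $\{V_1,\ldots,V_n\}$. The conjugation action of $\operatorname{Sym}_m$ on $E=\mathbb{F}_2^m$ is the standard coordinate permutation, whose orbits on $\hat{E}\setminus\{0\}$ are the Hamming-weight level sets $O_k$ of cardinality $\binom{m}{k}$, for $k=1,\ldots,m$. The $\operatorname{Sym}_m$-invariant subset $\{\rho_1,\ldots,\rho_n\}$ has size exactly $m$, and since $\binom{m}{k}>m$ for $2\leqslant k\leqslant m-2$, a short case check rules out any union of two or more orbits and forces $\{\rho_1,\ldots,\rho_n\}$ to be a single orbit (either $O_1$ or $O_{m-1}$). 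Consequently $\operatorname{Sym}_m$ acts transitively on $\{V_1,\ldots,V_n\}$; the $\Sigma$-invariance of $W$ makes $I$ itself $\operatorname{Sym}_m$-invariant, so transitivity yields $I=\{1,\ldots,n\}$ and $W\supseteq V_1\oplus\cdots\oplus V_n=V$. The chief obstacle I expect is the combinatorial orbit classification just described, with a secondary bookkeeping point being the need to invoke $2$-divisibility in order to restrict the weight decomposition from $V$ to $W$.
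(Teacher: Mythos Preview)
The paper does not give its own proof of this statement: it is quoted as a Fact from \cite[Proposition~2.18]{bbsharp}, with no argument supplied, so there is nothing in the present paper to compare your proof against.

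That said, your argument is correct and self-contained within the tools the paper makes available. Fact~\ref{weightspaces} yields $m=n$ and the weight decomposition $V=\bigoplus V_i$; faithfulness of $E$ on $V$ forces the $\rho_i$ to form an $\mathbb{F}_2$-basis of $\hat E$ (hence to be pairwise distinct); $2$-divisibility of the connected $W$ lets the idempotents $\pi_i$ restrict to $W$, giving $W=\bigoplus(W\cap V_i)$; and the combinatorial step---that a ${\rm Sym}_m$-invariant $m$-element subset of $\hat E\smallsetminus\{0\}$ must be a single orbit $O_1$ or $O_{m-1}$---is indeed a short case check (the inequality $\binom{m}{k}>m$ for $2\leqslant k\leqslant m-2$ handles $m\geqslant 4$, and $m\leqslant 3$ is immediate). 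Your concern about that step is unwarranted; it is routine. One small sharpening: once you have passed to the connected component $W^\circ$, each $W\cap V_i=\pi_i(W)$ is a definable image of a connected group, hence connected, so the dichotomy is actually ``$0$ or $V_i$'' rather than ``finite or $V_i$''.
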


\begin{fact} {\rm (Zilber)} {\rm \cite[Theorem 9.1]{bn}}
Let $A$ and $V$ be
	connected abelian groups of \fmr such that $A$ acts on $V$ definably, $C_A(V)=1$ and V is $A$-minimal. Then there exists an algebraically
	closed field $K$ and a definable subgroup $S\leqslant K^*$ such that the action $A\curvearrowright V$ is definably equivalent to the
	natural action of $S$ on $K^+$. \label{zilber}
\end{fact}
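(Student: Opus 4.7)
The plan is to extract a definable algebraically closed field $K$ from the pair $(A,V)$ by looking at the ring of endomorphisms of $V$ generated by the action of $A$, and then to use $A$-minimality of $V$ to show that $V$ is one-dimensional over $K$. Since $A$ is abelian, distinct elements of $A$ induce commuting endomorphisms of $V$, so I would let $R$ be the subring of $\operatorname{End}(V)$ generated by the image of $A$ together with $0$ and $1$. It is commutative, and each $r \in R$ is a definable endomorphism of $V$ commuting with the $A$-action. For any nonzero $r$, both $\ker(r)$ and $r(V)$ are $A$-invariant definable subgroups, and $r(V)$ is connected because $V$ is. Hence $A$-minimality forces $r(V)=V$, so $r$ is surjective, while $\ker(r)$ is either $V$ (excluded by $r\neq 0$) or finite. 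Thus every nonzero element of $R$ is ``almost invertible''.

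Next, I would form a ring of fractions from $R$: after quotienting out the finite kernels uniformly (or restricting to the maximal divisible part of $V$) so that the elements of $R$ become genuine automorphisms, one obtains a commutative definable ring in which every nonzero element is invertible, i.e., a field $K$. Because the construction is interpretable in the original structure, $K$ has finite Morley rank, and Macintyre's theorem on $\omega$-stable fields then yields that $K$ is algebraically closed. By construction $V$ becomes a $K$-vector space; $A$-minimality prevents any proper nontrivial $K$-subspace, forcing $V \cong K^+$. Then $A$ acts on $V$ through a definable subgroup $S \leqslant K^*$ by scalar multiplication, which is exactly the desired equivalence.

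The main obstacle is converting the pointwise statement ``every nonzero $r\in R$ has finite kernel and is surjective'' into an interpretable field of finite Morley rank acting definably on $V$. This requires handling the finite kernels uniformly (via descending chain conditions on definable subgroups, in Baldwin--Saxl style), coding the passage to fractions by a definable equivalence relation, and verifying that the resulting field is still interpretable in the ambient finite Morley rank structure so that Macintyre's theorem applies. Once those definability subtleties are resolved, identifying $V$ with $K^+$ and $A$ with a subgroup of $K^*$ follows routinely from $A$-minimality.
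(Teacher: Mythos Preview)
The paper does not supply a proof of this statement; it is quoted as Zilber's theorem with a pointer to \cite[Theorem 9.1]{bn}, so there is nothing in the paper itself to compare your attempt against.

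That said, your outline is the classical strategy and the first two moves are exactly right: the commutative subring $R\subseteq\operatorname{End}(V)$ generated by $A$ is an integral domain, since $A$-minimality forces every nonzero $r\in R$ to be surjective with finite kernel, and $R$ is commutative because $A$ is abelian. You have also correctly located the only genuine difficulty, namely turning $R$ into an \emph{interpretable} field so that Macintyre's theorem applies.

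Where your sketch has a gap is in the fix you propose for that obstacle. Passing to the abstract fraction field of $R$ does not give a well-defined action on $V$: if $\ker(s)\ne 0$ then $s^{-1}$ is genuinely multivalued, and $(r/s)\cdot v$ depends on the choice of preimage unless $\ker(s)\subseteq\ker(r)$, which you have no reason to expect. Likewise, ``restricting to the maximal divisible part of $V$'' is empty when $V$ is an elementary abelian $p$-group, which is one of the two cases forced by connectedness and $A$-minimality. The standard proof in \cite{bn} avoids localisation altogether: after reducing to $V$ torsion-free or of exponent $p$, one fixes a nonzero $v_0\in V$ and shows that the evaluation map $r\mapsto r(v_0)$ identifies $R$ with an $A$-invariant, hence (by minimality) cofinite or full, definable subset of $V$; the ring operations transported to $V$ are then definable, and Macintyre's theorem gives algebraic closedness. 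So your plan is sound in spirit, but the third step should be the evaluation/orbit argument rather than a ring-of-fractions construction.
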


\begin{fact}{\rm \cite[Lemma I.8.2]{abc}} Let $G$ be a connected solvable group acting on an abelian group $V$. If $V$ is $G$-minimal, then $G'$ acts trivially on $V$. \label{minimalaction}
\end{fact}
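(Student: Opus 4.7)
The plan is to exploit the $G$-minimality hypothesis on two well-chosen definable $G$-invariant subgroups of $V$: the fixed-point subgroup $C_V(G')$ and the commutator $[G',V]$. Both are definable (by the chain condition on definable subgroups) and $G$-invariant, the latter because $G'$ is normal in $G$. Before starting, I would reduce to the case that $V$ is connected: if $V$ is infinite, then $V^\circ$ is a non-trivial definable $G$-invariant subgroup of $V$, so $G$-minimality forces $V = V^\circ$; the case $V$ finite is essentially trivial.

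Next I would try to rule out $[G',V] = V$ by exhibiting $[G',V]$ as finite. By $G$-minimality it is either finite or equal to $V$. If it is finite, then for each $g \in G'$ the map $\varphi_g : V \to V$ sending $v \mapsto g(v)-v$ is a definable group homomorphism (since $V$ is abelian); its image $[g,V]$ is the continuous image of the connected group $V$, hence connected, but is also contained in the finite group $[G',V]$, hence trivial. So every $g \in G'$ acts trivially on $V$, which is the desired conclusion.

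The main obstacle is therefore ruling out the alternative $[G',V] = V$. Here my plan is to induct on the derived length of $G$, the abelian case being immediate since $G' = 1$. For the inductive step, I would use that $G'$ is connected and nilpotent by Fact~\ref{commutator-nilpotent}(a), so $A := Z(G')^\circ$ is a non-trivial connected abelian subgroup of $G'$, characteristic in $G'$ and hence normal in $G$. I would then pick a minimal infinite definable $A$-invariant subgroup $V_0 \leqslant V$ and apply Zilber's field theorem (Fact~\ref{zilber}) to the action of $A/C_A(V_0)$ on $V_0$, obtaining an algebraically closed field $K$ with $V_0 \simeq K^+$ and $A/C_A(V_0) \hookrightarrow K^*$. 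The $G$-translates $V_0^g$ are again $A$-invariant (because $A$ is normal in $G$), and the $G$-invariant sum $\sum_{g \in G} V_0^g$ contains the infinite subgroup $V_0$, hence equals $V$ by $G$-minimality. The delicate technical step, which I expect to absorb most of the work, is to reconcile the various Zilber field structures on the translates $V_0^g$ coherently, so as to force $A$ itself to centralise $V$; once this is achieved, passing to the quotient $G/C_G(V)$ (or to $G/A$) strictly reduces the derived length, and the induction closes.
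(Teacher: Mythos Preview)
The paper does not supply a proof of this statement; it is simply quoted from \cite[Lemma~I.8.2]{abc}. So there is no argument in the paper to compare against, and I comment only on the soundness of your plan.

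The reduction to connected $V$ and the treatment of the case where $[G',V]$ is finite are both correct, and the overall architecture --- pick a connected abelian normal $A$ inside $G'$, apply Fact~\ref{zilber} to an $A$-minimal $V_0$, cover $V$ by the $G$-translates $V_0^g$, and induct --- is indeed the standard route to this lemma. Two points, however, do not work as written.

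First, a minor one: with $A=Z(G')^\circ$, passing to $G/A$ need not decrease the derived length of $G$; it only decreases $\rk G'$. You should either induct on $\rk G'$, or instead take $A$ to be the last nontrivial term of the derived series of $G$ (still connected and abelian), whose quotient genuinely has shorter derived series.

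Second, and this is the substantive gap: the stated target of your ``delicate step'' cannot be reached by the method you describe. In the sub-case you are analysing, $A$ acts nontrivially on $V_0$ by construction, so $A$ does \emph{not} centralise $V$, and merely reconciling the field structures on the $V_0^g$ will not change that. What the reconciliation actually produces is a single $K$-vector-space structure on $V$ for which $G$ acts $K$-linearly and $A$ acts by scalars. To close the argument you then need a further step you have not mentioned: either apply the determinant $G\to K^*$ and observe that $A\leqslant G'\leqslant\ker\det$ forces the scalar image of the infinite connected group $A$ into the finite group of $n$-th roots of unity; or take the Zariski closure of $G$ in $\mathrm{GL}_n(K)$ and invoke Lie--Kolchin to force $\dim_K V=1$, whence $G/C_G(V)\leqslant K^*$ is abelian outright. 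Without one of these closing moves the induction does not go through.
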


Recall that if a group has no non-trivial $p$-elements, we call it a \textit{$p^\perp$-group}. A connected divisible abelian group is called a \textit{torus}, and a torus $A$ is called \textit{good} if every definable subgroup of $A$ is the definable hull of its torsion elements.

\begin{fact}  \label{fact:good-torus}  \cite[Proposition I.11.7]{abc}
	If a connected solvable $p^\perp$-group $A$ acts faithfully on an abelian $p$-group $V$, then $A$ is a good torus.
\end{fact}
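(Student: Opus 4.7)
The statement is quoted from \cite[Proposition I.11.7]{abc}; here is the strategy one could follow using the tools assembled above. The claim splits into two parts: showing $A$ is a torus (connected divisible abelian) and then showing that this torus is good.

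\emph{Step 1 --- $A$ is abelian.} I would take a chain $0 = V_0 < V_1 < \cdots < V_k = V$ of $A$-invariant connected definable subgroups refining to $A$-minimal quotients $V_i/V_{i-1}$. Fact \ref{minimalaction} then gives that $A' = [A,A]$ acts trivially on each quotient, so $A'$ acts unipotently on $V$, meaning $(a-1)^k = 0$ in $\mathrm{End}(V)$ for every $a \in A'$. I claim that any element of $A$ acting unipotently on $V$ is trivial: setting $u = a-1$ and working first on the exponent-$p$ subgroup $V[p]$ (where $p = 0$ in the endomorphism ring, so the freshman's dream gives $a^{p^M} = (1+u)^{p^M} = 1 + u^{p^M} = 1$ for $p^M \geqslant k$), and then extending by the filtration $V[p] \leqslant V[p^2] \leqslant \cdots$ together with the divisible/bounded decomposition supplied by Fact \ref{structure-of-nilpotent}, one concludes $a^{p^M} = 1$ in $\mathrm{Aut}(V)$. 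Faithfulness pushes this to $a^{p^M} = 1$ in $A$, and the $p^\perp$ hypothesis then forces $a = 1$. Applied to $A'$, this yields $A' = 1$, so $A$ is abelian.

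\emph{Step 2 --- Zilber and conclusion.} With $A$ connected abelian and $p^\perp$, apply Fact \ref{zilber} to each $A$-minimal quotient $V_i/V_{i-1}$: this embeds $A / C_A(V_i/V_{i-1})$ as a definable subgroup of $K_i^*$ for some algebraically closed field $K_i$ of characteristic $p$. The kernel of the diagonal map $A \to \prod_i A / C_A(V_i/V_{i-1})$ is $\bigcap_i C_A(V_i/V_{i-1})$, which is precisely the set of elements of $A$ acting unipotently on $V$, and is therefore trivial by the unipotency analysis of Step 1. Hence $A$ embeds definably in $\prod_i K_i^*$. Each $K_i^*$ is a good torus (its only proper definable subgroups are finite cyclic groups of roots of unity, which are trivially definable hulls of their torsion), and products and definable subgroups of good tori are again good tori, so $A$ itself is a good torus.

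\emph{Main obstacle.} The principal difficulty is the unipotency argument in Step 1, which must propagate from the exponent-$p$ case to arbitrary abelian $p$-groups of finite Morley rank via the $V[p^n]$ filtration and the $D * B$ decomposition, and then convert the $p$-power order of $a$ in $\mathrm{Aut}(V)$ into $p$-power order in $A$ (via faithfulness) before invoking $p^\perp$. Once this is in place, the composition series, Zilber's field theorem, and the closure properties of good tori combine formally to deliver the conclusion.
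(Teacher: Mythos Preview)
The paper does not prove this statement at all: it is recorded as a Fact with a bare citation to \cite[Proposition I.11.7]{abc}, so there is no in-paper argument to compare against. Your two-step strategy (kill $A'$ via a composition series, then embed $A$ in a product of multiplicative groups via Fact~\ref{zilber}) is the standard one and Step~2 is fine.

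There is, however, a genuine gap in Step~1 as you have written it. Your claim that an element $a$ acting unipotently on $V$ satisfies $a^{p^M}=1$ in $\mathop{\rm Aut}(V)$ is valid when $V$ has bounded exponent, but fails once $V$ has a nontrivial divisible part: on $(\mathbb{Z}_{p^\infty})^2$ the transvection $(x,y)\mapsto (x+y,y)$ satisfies $(a-1)^2=0$ yet has infinite order. So the passage ``extending by the filtration $V[p]\leqslant V[p^2]\leqslant\cdots$ together with the $D*B$ decomposition'' does not deliver a finite exponent for $a$, and you cannot invoke $p^\perp$ directly. The repair is not hard but needs to be said: for a torsion element $a\in A$ of order coprime to $p$, each $v\in V$ lies in a finite $\langle a\rangle$-invariant subgroup on which $a$ acts both semisimply (Maschke, coprime order) and unipotently, hence trivially; and connected bounded-exponent $q$-subgroups of $A$ for $q\ne p$ are eliminated directly by Fact~\ref{ptori} applied inside $V\rtimes A$, which then forces every connected nilpotent subgroup of $A$ to be a torus and lets the argument proceed. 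For the paper's actual application $V$ is elementary abelian, so your argument as written already suffices there; the gap matters only for the Fact in its stated generality.
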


\begin{fact}\label{ptori}{\rm \cite[Proposition I.8.5]{abc}}  Let $p$ be a prime. Assume $V\unlhd G$ is a definable solvable subgroup that contains no $p$-unipotent subgroup, and $U\leqslant G$ is a definable connected $p$-group of bounded exponent, then $[U,V]=1$.
\end{fact}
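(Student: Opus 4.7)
The plan is to proceed by induction on the derived length of $V$, reducing to the case where $V$ is abelian and there using Zilber's Field Theorem (Fact~\ref{zilber}) to rule out any non-trivial action of $U$. Note throughout that $U$ is nilpotent by Fact~\ref{commutator-nilpotent}(c), and that the hypothesis on $V$ forbids any connected $p$-subgroup of bounded exponent inside $V$; in particular $(V\cap U)^\circ = 1$.

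For the base case, suppose $V$ is abelian. I first treat $V^\circ$: choose a chain $0=W_0<W_1<\dots<W_n=V^\circ$ of definable connected $U$-invariant subgroups with each successive quotient $W_i/W_{i-1}$ being $U$-minimal. By Fact~\ref{minimalaction} the derived subgroup $U'$ acts trivially on each such factor, so the action factors through the connected abelian group $U/U'$. If this induced action were non-trivial on some $W_i/W_{i-1}$, then Fact~\ref{zilber} would produce an embedding of a non-trivial connected $p$-unipotent quotient of $U/U'$ into the multiplicative group $K^*$ of an algebraically closed field $K$; but $K^*$ has only finite $p^k$-torsion and hence no non-trivial connected subgroup of bounded exponent, a contradiction. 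Hence $U$ centralises every factor $W_i/W_{i-1}$. To lift this, suppose inductively that $U$ centralises $W_{i-1}$; for $u\in U$ the map $w\mapsto[u,w]$ is a definable homomorphism from the connected group $W_i/W_{i-1}$ into $W_{i-1}[p^k]$ (where $p^k$ is the exponent of $U$). The no-$p$-unipotent hypothesis on $V$ forces $W_{i-1}[p^k]^\circ=1$, so this target is finite, and the homomorphism must be trivial. Hence $U$ centralises $W_i$, and by induction $V^\circ$. Since $U$ is connected, it acts trivially on the finite quotient $V/V^\circ$, and the analogous cocycle argument with values in the finite group $V^\circ[p^k]$ yields $[U,V]=1$.

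For the inductive step, $[V,V]$ is a definable normal solvable subgroup of $G$ of strictly smaller derived length; as a subgroup of $V$ it inherits the no-$p$-unipotent hypothesis, so $[U,[V,V]]=1$ by the inductive hypothesis. The abelian quotient $V/[V,V]$ also inherits this hypothesis: if $Q/[V,V]$ were a non-trivial connected $p$-unipotent subgroup, then the connected $p$-Sylow of its preimage $Q^\circ$ in $V$---a $p$-torus under our hypothesis---would surject onto $Q/[V,V]$, forcing a divisible $p$-group to be of bounded exponent, which is absurd. The base case therefore applies and gives $[U,V/[V,V]]=1$, i.e.\ $[U,V]\leqslant[V,V]$. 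Finally, using the commutator identity $[u_1u_2,v]=[u_1,v]^{u_2}[u_2,v]$ together with $[U,[V,V]]=1$, for each fixed $v\in V$ the map $u\mapsto[u,v]$ becomes a definable homomorphism $U\to[V,V]$ whose image is a connected $p$-subgroup of bounded exponent in $V$ and hence trivial. This yields $[U,V]=1$. I expect the delicate step to be this lifting from composition factors to the full abelian group, where \emph{a priori} the action of $U$ could be ``unipotent'' rather than trivial; the resolution rests on the finiteness of $p^k$-torsion subgroups of $V$, a direct consequence of the no-$p$-unipotent hypothesis.
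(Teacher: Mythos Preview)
The paper does not prove this statement; it is quoted as a known fact from \cite[Proposition I.8.5]{abc}, so there is no in-paper proof to compare against. Your overall strategy (induction on derived length, Zilber on $U$-minimal abelian sections, then lifting via commutator homomorphisms) is the standard one and is essentially correct, but two steps need repair.

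\textbf{The inductive step has a genuine gap.} You assert that ``the connected $p$-Sylow of $Q^\circ$'' is a $p$-torus surjecting onto $Q/[V,V]$, but a connected \emph{solvable} group of finite Morley rank need not have a well-defined Sylow $p$-subgroup, so this sentence has no clear meaning. The fix uses facts already available in the paper: write $Q^\circ = [Q^\circ,Q^\circ]\,C$ with $C$ connected nilpotent (Fact~\ref{structure-solvable}(b)); since $[Q^\circ,Q^\circ]\leqslant [V,V]$, the nilpotent group $C$ surjects onto the $p$-unipotent group $Q/[V,V]$. Now decompose $C=D*B$ as in Fact~\ref{structure-of-nilpotent}: the divisible part $D$ maps trivially into a group of bounded exponent, the $p'$-part of $B$ maps trivially into a $p$-group, and the $p$-part of $B$ is a $p$-unipotent subgroup of $V$, hence trivial by hypothesis. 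This forces $Q/[V,V]=1$, as you wanted.

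\textbf{The base-case lifting is correct but under-justified, and you chose the harder map.} Your claim that the image of $w\mapsto [u,w]$ lands in $W_{i-1}[p^k]$ is true, but it requires the computation that $(\alpha-1)^2=0$ on $W_i$ together with $\alpha^{p^k}=1$ forces $p^k(\alpha-1)=0$ (expand $0=\alpha^{p^k}-1=(\alpha-1)\sum_{j<p^k}\alpha^j$ and use $\alpha^j=1+j(\alpha-1)$). You do not give this. It is cleaner to use the map you already deploy at the end: for fixed $w\in W_i$, the map $u\mapsto[u,w]$ is a homomorphism $U\to W_{i-1}$ (since $U$ centralises $W_{i-1}$), whose image is a connected $p$-subgroup of bounded exponent of $V$ and hence trivial. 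The same map handles the passage from $V^\circ$ to $V$; your ``analogous cocycle argument'' there is problematic as written because the domain $V/V^\circ$ is finite, so a homomorphism into a finite target need not vanish.
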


\begin{fact}{\rm \cite[Lemma I.4.5]{abc}} A definable group of automorphisms of an infinite field of \fmr is trivial.
\label{automorphisms}
\end{fact}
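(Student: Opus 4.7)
The plan is to show $G$ must be trivial by separating into the finite and connected component cases, applying Macintyre's theorem (an infinite field of finite Morley rank is algebraically closed) in both.

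\textit{Finite case.} Suppose first that $G$ is finite and non-trivial. The fixed field $F := K^G$ is a definable subfield, and by classical finite Galois theory $[K:F] = |G|$, so $F$ is infinite and inherits finite Morley rank from $K$; by Macintyre's theorem $F$ is algebraically closed. By the Artin--Schreier theorem, an algebraically closed field has no proper finite extensions other than degree-$2$ extensions of a real-closed subfield, but algebraically closed fields are not real closed, so $K = F$. This contradicts non-triviality of the $G$-action.

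\textit{Connected case.} Now suppose $G^\circ \ne 1$, and set $F_0 := K^{G^\circ}$. Faithfulness forces $F_0 \subsetneq K$. For $a \in K \setminus F_0$, the stabilizer $(G^\circ)_a$ is a proper definable subgroup of $G^\circ$; since connected definable groups have no proper definable subgroup of finite index, $\rk((G^\circ)_a) < \rk(G^\circ)$, so the orbit $G^\circ \cdot a$ has positive Morley rank. On the other hand, every $b \in G^\circ \cdot a$ satisfies exactly the same polynomial relations over $F_0$ as $a$ does, since elements of $G^\circ$ act as $F_0$-algebra homomorphisms. If $a$ were algebraic over $F_0$, its orbit would be confined to the finite set of roots of its minimal polynomial, contradicting positive rank. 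Hence every element of $K \setminus F_0$ must be transcendental over $F_0$.

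\textit{Main obstacle.} The core difficulty is ruling out this transcendental scenario. When $F_0$ is infinite (automatic in characteristic $0$, and reducible to this by passing to a larger invariant subfield in positive characteristic), Macintyre gives $F_0$ algebraically closed, hence relatively algebraically closed in $K$. I would then aim for a contradiction by applying Fact~\ref{zilber} to a suitable minimal $G^\circ$-invariant connected subgroup of $K^+$: the resulting identification with a torus action on the additive group of an algebraically closed field clashes with the multiplicativity constraint that $G^\circ$ acts by field automorphisms, forcing $G^\circ$ to fix $K$ pointwise.
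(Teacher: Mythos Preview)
The paper does not prove this statement; it is recorded as a Fact with a citation to \cite[Lemma I.4.5]{abc} and no argument is given. So there is no ``paper's proof'' to compare against, and your attempt must stand on its own.

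Your finite case is fine (the Artin--Schreier remark is a detour: once $F=K^G$ is algebraically closed, any finite extension of $F$ equals $F$). The case division is also sound: if $G$ is non-trivial then either $G$ is finite non-trivial or $G^\circ\neq 1$.

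The genuine gap is in the connected case. You correctly show that every $a\in K\setminus F_0$ has infinite $G^\circ$-orbit and hence is transcendental over $F_0$, i.e.\ $F_0$ is relatively algebraically closed in $K$. But the ``Main obstacle'' paragraph is not a proof: you invoke Fact~\ref{zilber} without saying what the minimal subgroup is, why $G^\circ$ acts on it through an \emph{abelian} quotient (a hypothesis of Zilber's theorem), or how the resulting field interpretation contradicts anything. As written this is a hope, not an argument.

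The standard completion is both shorter and avoids Zilber entirely. Since $K$ is algebraically closed (Macintyre) it contains roots of every polynomial over $F_0$; relative algebraic closedness of $F_0$ then forces $F_0$ itself to be algebraically closed, hence infinite with $\rk F_0\geqslant 1$. Now $K$ is an $F_0$-vector space, and any $d$ linearly independent elements span a definable subgroup of rank $d\cdot\rk F_0$; finiteness of $\rk K$ gives $\dim_{F_0}K<\infty$, so $K$ is algebraic over the algebraically closed field $F_0$, whence $K=F_0$. This contradicts faithfulness of the $G^\circ$-action. Replacing your last paragraph with this rank-versus-dimension argument closes the gap.
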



\section{Definable actions on elementary abelian $p$-groups}

In this section, $V$ is a connected elementary abelian $p$-group of finite Morley rank and $X$ is a  finite group acting on $V$ definably. We use additive notation for the group operation on $V$ and treat $V$ as a vector space over $\bF_p$.

It is convenient to work with the ring $R$ generated by $X$ in $\mathop{{\rm End}}V$. It is finite and its elements are definable endomorphisms; $R$ is traditionally called the enveloping algebra (over $\bF_p$) of the action of  $X$ on $V$. We treat $V$ as a right $R$-module.

If $v\in V$,  the set
\[
vR = \{\, vr : r\in R\,\}
\]
is an $R$-submodule, it is called the\emph{ cyclic submodule generated by} $v$. Of course all cyclic submodules contain less than $|R|$ elements and therefore, there are finitely many possibilities for the isomorphism type of each of them.

\subsection{Coprime actions} \label{subsec:Maschke}

In this subsection, we assume that $X$ is a finite $p'$-group acting on $V$. Recall that a torsion group is called a \textit{$p'$-group}, if it has no non-trivial $p$-elements.

We recall some generalities from representation theory. Applying  Maschke's Theorem to the action of $X$ on $R$ by right multiplication, we see that $R$ is a semisimple  $\bF_p$-algebra and that every finite $R$-submodule in $V$ is semisimple, that is, a direct sum of simple modules.

The following important (but easy) result (generalising \cite[Theorem 5]{borovik2020})  now follows immediately.

\begin{theorem} \label{th:Maschke} Let $V$ be a connected elementary abelian $p$-group of finite Morley rank, $X$ a finite $p'$-group acting on $V$ definably, and $R$ the enveloping algebra over $\mathbb F_p$ for the action of $X$ on $V$.
	Assume that  $A_1,A_2, \dots, A_m$ is the complete list of non-trivial simple  submodules for $R$ in $V$, up to isomorphism.
	Then
	\[
	\rk V \geqslant m.
	\]
\end{theorem}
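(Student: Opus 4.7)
The plan is to use the semisimplicity of $R$ (established by the authors via Maschke's theorem) to produce a \emph{definable} isotypic decomposition of $V$ as an $R$-module, and then to extract the rank bound using the connectedness of $V$. Because every module over a semisimple ring is itself semisimple, $V$ admits the canonical finite decomposition
$$V = V^{(0)} \oplus V^{(1)} \oplus \cdots \oplus V^{(m)},$$
where $V^{(0)} = C_V(X)$ is the trivial (fixed-point) isotypic component, possibly zero, and for $1 \leqslant i \leqslant m$, $V^{(i)}$ is the sum of all $R$-submodules of $V$ isomorphic to $A_i$.

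The first step is to observe that the summands are definable. The Wedderburn decomposition of the finite semisimple $\mathbb{F}_p$-algebra $R$ yields central idempotents $e_i \in R$ with $V^{(i)} = e_i V$; since every element of $R$ acts on $V$ as a definable endomorphism, each $V^{(i)}$ is a definable subgroup of $V$, and the above is a direct sum of definable subgroups.

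The second step uses the connectedness of $V$ to turn the algebraic decomposition into a rank inequality. For $1 \leqslant i \leqslant m$, the component $V^{(i)}$ contains the non-trivial submodule $A_i$ and is therefore non-zero. If such a $V^{(i)}$ were finite, then the complementary summand $\bigoplus_{j \neq i} V^{(j)}$ would be a proper definable subgroup of finite index in $V$, contradicting connectedness. Hence each such $V^{(i)}$ is infinite, so $\rk V^{(i)} \geqslant 1$, and additivity of Morley rank on direct sums of definable subgroups gives
$$\rk V = \sum_{i=0}^{m} \rk V^{(i)} \geqslant \sum_{i=1}^{m} \rk V^{(i)} \geqslant m.$$

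I expect the main issue to be the passage from the purely algebraic isotypic decomposition to one by \emph{definable} subgroups; this is precisely where the observation that the relevant central idempotents live inside $R$ — whose elements act on $V$ as definable endomorphisms — becomes crucial. Once that is in place, connectedness and rank additivity dispose of the rest in a single line, which matches the authors' description of the result as ``important (but easy)''.
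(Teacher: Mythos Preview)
Your proof is correct and follows essentially the same route as the paper: both produce the isotypic decomposition of $V$ as an $R$-module, note that the summands are definable, and then use connectedness of $V$ to force each non-zero summand to have rank at least $1$. The only cosmetic differences are that the paper describes each isotypic component as $V_i = \{v\in V : \text{every simple submodule of } vR \text{ is } \cong A_i\}$ rather than as $e_iV$ for a Wedderburn idempotent (these are the same subgroup), and the paper reads ``non-trivial'' as ``non-zero'' so that no separate fixed-point summand $V^{(0)}$ is split off; neither point affects the argument.
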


\begin{proof}
	For each $i=1,\ldots,m$, denote
	\[
	V_i = \{\, v\in R :  \mbox{ all simple submodules of  } vR \; \mbox{  are isomorphic to  }\; A_i\,\}.
	\]
	It is easy to see that all $V_i$ are definable submodules of $V$ and
	\[
	V = V_1 \oplus V_2 \oplus\cdots\oplus V_m.
	\]
	Since $V$ is connected, all $V_i$'s are connected.
	Hence, being a non-trivial, definable, connected submodule, each $V_i$ has Morley rank at least $1$. Therefore, $\rk(V)\geqslant m$.
\end{proof}

\begin{problem}
	It would be interesting to remove from Theorem \emph{\ref{th:Maschke}} the assumption that $X$ is a $p'$-group and prove the following:
	\begin{quote}
		If $A_1,A_2, \dots, A_m$ are  nontrivial simple pairwise non-isomorphic $R$-modules appearing as sections $W/U$ for some definable $R$-modules $U < W \leqslant V$, then
		\[
		\rk V \geqslant m.
		\]
	\end{quote}
\end{problem}

\subsection{$p$-Group actions}

The following is folklore, and this elegant and short proof is suggested by the anonymous referee.

\begin{fact}	\label{trivialaction}
	Let $V$ be a connected elementary abelian $p$-group of \fmrd.
	\bi
	\item[{\rm (a)}] If $x$ is a $p$-element and $\langle x\rangle$ acts on $V$ definably, then $[V,x]$ is a proper subgroup of $V$. In particular, if $\rk(V)=1$, then the action is trivial.
	\item[{\rm (b)}] If $P$ is a $p$-torus which acts on $V$ definably, then the action is trivial.
	\ei
\end{fact}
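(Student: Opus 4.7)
The plan for (a) is to work inside the endomorphism ring $\operatorname{End}(V)$ and exploit that $V$ is an elementary abelian $p$-group, so Frobenius is a ring homomorphism on every commutative subalgebra. Since $x$ is a $p$-element, it has order $p^a$ for some $a$, and $x$ commutes with $1$, so
\[ (x-1)^{p^a} = x^{p^a}-1 = 0 \qquad \text{in } \operatorname{End}(V). \]
In additive notation, $[V,x]$ is exactly the image $(x-1)V$, which is a definable connected subgroup of $V$ because $V$ is connected and $v \mapsto (x-1)v$ is a definable endomorphism. If $(x-1)V = V$, iterating gives $V = (x-1)^{p^a}V = 0$; so assuming $V \neq 0$, we obtain $[V,x] \subsetneq V$. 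In particular, when $\rk V = 1$, this proper connected definable subgroup has rank $0$ and is therefore trivial, hence $x$ centralises $V$.

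For (b), the plan is to reduce every element of $P$ to the situation of (a) by using divisibility of $P$. Fix $y \in P$ and choose any $n$ with $p^n \geq \rk V$. Divisibility supplies $z \in P$ with $z^{p^n} = y$, and then
\[ y-1 = z^{p^n}-1 = (z-1)^{p^n} \qquad \text{in } \operatorname{End}(V). \]
Consider the descending chain of definable connected subgroups
\[ V \;\supseteq\; (z-1)V \;\supseteq\; (z-1)^2 V \;\supseteq\; \cdots \;\supseteq\; (z-1)^{p^n} V. \]
Applying (a) with $z$ in place of $x$ and with each nonzero term $(z-1)^iV$ in place of $V$ forces $(z-1)^{i+1}V \subsetneq (z-1)^iV$ whenever $(z-1)^iV \neq 0$. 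Because a strict inclusion of connected definable subgroups strictly drops Morley rank, the chain hits $0$ within $\rk V$ steps. As $p^n \geq \rk V$, this yields $(y-1)V = (z-1)^{p^n}V = 0$, so $y$ acts trivially on $V$. Since $y \in P$ was arbitrary, $P$ acts trivially on $V$.

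The main subtlety is the passage from (a) to (b): part (a) only bounds $[V,y]$ strictly inside $V$ rather than trivialising it, so no single element is immediately forced to centralise $V$. Divisibility of $P$ bridges this gap by letting us rewrite $y-1$ as an arbitrarily high iterate $(z-1)^{p^n}$, after which finite Morley rank forces the iterates of $z-1$ to annihilate $V$ long before the exponent $p^n$ is reached.
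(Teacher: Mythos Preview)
Your proof is correct and follows essentially the same approach as the paper's: both work in $\operatorname{End}(V)$, use the Frobenius identity $(x-1)^{p^a}=x^{p^a}-1=0$ to obtain a descending chain of connected definable subgroups, and for (b) exploit divisibility of the $p$-torus to write $y-1=(z-1)^{p^n}$ with $p^n\geqslant\rk V$, so that the rank-drop forces the chain to vanish. Your write-up is in fact slightly more explicit than the paper's in justifying connectedness of the images and the strict rank decrease at each step.
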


\begin{proof} (a) We will work in $\mathop{{\rm End}}V$. Let $x\in \mathop{{\rm End}}V$ of order $p^k$. Since $(x-1)^{p^k}=x^{p^k}-1=0$, we get a descending chain of definable subgroups $$V\geqslant V(x-1)\geqslant V(x-1)^2\geqslant \cdots$$ which reaches $0$ in at most $p^k$ steps. Thus, the chain does not become stationary before it reaches $0$. Therefore,  $V(x-1)=[V,x]$ is a proper subgroup in $V$.\\
	(b) Since $V$ has \fmrd, for any $p$-element $x$ acting definably on $V$ the above chain reaches $0$ in at most $\rk(V)$ steps. Therefore, if $p^k\geqslant \rk(V)$ then $V(x^{p^k}-1)=V(x-1)^{p^k}=0$. Since $P$ is a $p$-torus, for any $y\in P$, there exists $x\in P$ such that $y=x^{p^k}$. Hence, $V(y-1)=V(x^{p^k}-1)=0$, and we are done.
	\end{proof}

\section{Preliminary Results}

Throughout this section, we assume $G$ and $V$  are groups of \fmrd,  $V$ is a connected elementary abelian $p$-group of Morley rank $n$, where $p$ is an odd prime, and $G$ acts on  $V$ definably and faithfully.

\begin{lemma} \label{2torus} Let $H$ a definable connected subgroup of $G$, and $q\neq p$ a prime number. Then $H$ does not contain any definable connected $q$-groups of bounded exponent. In particular, if $H$ has an involution, then the connected component of any of its Sylow $2$-subgroups is a $2$-torus.
\end{lemma}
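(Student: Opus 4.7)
The plan is to reduce both assertions to Fact \ref{ptori}, with the prime $q$ playing the role of the prime $p$ there. The conceptual point is that the two primes in Fact \ref{ptori} are in a sense dual: our connected elementary abelian $p$-group $V$, acted on faithfully by $G$, supplies exactly the normal solvable subgroup that contains no $q$-unipotent part, for every prime $q \neq p$.

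For the main assertion I would argue by contradiction. Suppose $U \leqslant H$ is a non-trivial definable connected $q$-group of bounded exponent, with $q \neq p$. Form the definable semidirect product $W = V \rtimes G$, which has \fmrd\ because the action of $G$ on $V$ is definable, and work inside $W$. Then $V \unlhd W$ is a definable normal abelian, hence solvable, subgroup; being a $p$-group with $q \neq p$, it contains no $q$-unipotent subgroup. Meanwhile $U \leqslant G \leqslant W$ is a definable connected $q$-group of bounded exponent. Applying Fact \ref{ptori} with the prime $q$ in place of $p$ yields $[U,V] = 1$. Faithfulness of the action of $G$ on $V$ now forces $U = 1$, contradicting the choice of $U$.

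The ``in particular'' clause then falls out quickly. Since $p$ is odd we have $2 \neq p$. Assume $H$ contains an involution; then Fact \ref{degenerate} tells us that any Sylow 2-subgroup $S$ of $H$ is infinite, so $S^\circ$ is non-trivial. By Fact \ref{sylow2} we may write $S^\circ = U * T$ where $U$ is a definable 2-unipotent subgroup (i.e.\ a definable connected 2-group of bounded exponent) and $T$ is a 2-torus. Since $U \leqslant H$, the first part of the lemma applied with $q = 2$ forces $U = 1$, so $S^\circ = T$ is a 2-torus, as claimed.

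I expect no serious obstacle: the proof is essentially a single invocation of Fact \ref{ptori} followed by the standard description of Sylow 2-subgroups. The only real care needed is in the bookkeeping of roles, making sure that our $V$ plays the part of the normal solvable subgroup in Fact \ref{ptori} and that one passes through the semidirect product $V \rtimes G$ to legitimately invoke that fact with $q$ in place of $p$.
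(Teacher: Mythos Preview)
Your proof is correct and follows essentially the same route as the paper, which simply says ``Combine Facts~\ref{ptori}, \ref{sylow2} and \ref{degenerate}.'' You have spelled out precisely how these three facts fit together, including the standard passage to the semidirect product $V\rtimes G$ needed to place $V$ as a normal subgroup so that Fact~\ref{ptori} applies with the roles of the primes swapped.
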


\begin{proof} Combine Facts~\ref{ptori}, \ref{sylow2} and \ref{degenerate}.
\end{proof}

\subsection{Groups of $p$-unipotent type}

Following \cite{Borovik-Burdges-Cherlin2007}, we shall call a group $K$ a  \emph{group of $p$-unipotent type}, if  every definable connected solvable subgroup in $K$ is a nilpotent $p$-group of bounded exponent. We still work under the assumptions of this section.

\begin{proposition} \label{p-unipotent type}
Let $K$ be a definable subgroup in $G$ which contains no good tori. Then $K$ is a torsion group of $p$-unipotent type. In addition, $K$ does not contain non-trivial definable divisible abelian subgroups.
\end{proposition}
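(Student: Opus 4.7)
The plan is first to prove the ``in addition'' clause that $K$ has no non-trivial definable divisible abelian subgroup, then to use it to force every definable connected solvable subgroup of $K$ to be $p$-unipotent, and finally to deduce that $K$ is a torsion group.

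For the first step, let $A\leq K$ be a definable divisible abelian subgroup. Since $A/A^\circ$ is both finite and divisible it is trivial, so $A$ is connected. Write the torsion subgroup $T(A)$ as $T_p\oplus T_{p'}$ with $T_p$ the $p$-primary and $T_{p'}$ the $p'$-primary parts; both are definable and connected. Fact \ref{trivialaction}(b) together with the faithful action on $V$ forces $T_p=1$. The subgroup $T_{p'}$ is a connected solvable $p^\perp$-subgroup acting faithfully on $V$, hence a good torus by Fact \ref{fact:good-torus}, hence trivial by the hypothesis on $K$. Therefore $A$ is torsion-free, but then $A$ itself is a connected solvable $p^\perp$-subgroup, hence a good torus by the same fact, hence trivial.

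Next, let $H$ be a definable connected solvable subgroup of $K$; I aim to show that $H$ is a nilpotent $p$-group of bounded exponent. It suffices to show $H$ has bounded exponent: Fact \ref{commutator-nilpotent}(c) then makes $H$ nilpotent, and Lemma \ref{2torus} annihilates every Sylow $q$-subgroup with $q\neq p$ in its central-product decomposition. Suppose $H$ is not of bounded exponent. Writing $H=[H,H]\cdot C$ with $C$ connected nilpotent (Fact \ref{commutator-nilpotent}(b)) and noting that $H/[H,H]$ is a quotient of $C$, at least one of $[H,H]$, $C$ is a connected nilpotent $N$ failing to have bounded exponent, and Fact \ref{structure-of-nilpotent} then produces a non-trivial definable connected divisible nilpotent subgroup $D$ of $N\leq K$. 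I now extract from $D$ a definable divisible abelian subgroup to contradict the first step. By standard structure theory the torsion $T(D)$ is a central divisible abelian subgroup of $D$; its $p$- and $p'$-primary parts are killed exactly as above, so $T(D)=1$. But then $Z(D)^\circ$ is a connected torsion-free abelian subgroup of $K$, non-trivial because $D$ is a non-trivial connected nilpotent group; by the first step its divisible part is trivial, so $Z(D)^\circ$ has bounded exponent and, being torsion-free, is trivial --- a contradiction.

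Finally, for any $g\in K$ the connected component $\overline{\langle g\rangle}^\circ$ of the definable hull is a connected abelian subgroup of $K$, so by what was just shown it is $p$-unipotent of some exponent $p^k$; multiplication by the finite index $[\overline{\langle g\rangle}:\overline{\langle g\rangle}^\circ]$ then bounds the order of $g$ and shows that $K$ is a torsion group. The main obstacle is the structural input used in the second step --- that in a connected divisible nilpotent group of finite Morley rank the torsion subgroup is central and divisible abelian, and that a non-trivial connected nilpotent group of finite Morley rank has non-trivial connected centre --- both of which I would cite from the structure theory of nilpotent groups of finite Morley rank in \cite{bn}.
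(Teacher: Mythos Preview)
Your argument is correct and uses the same ingredients as the paper's proof (Facts~\ref{trivialaction}, \ref{fact:good-torus}, \ref{structure-of-nilpotent}, \ref{structure-solvable}, and Lemma~\ref{2torus}). The only real difference is the order of the steps: the paper proves that $K$ is torsion \emph{immediately after} the divisible-abelian step, by observing that $d(x)^\circ$ is divisible abelian for any $x$ of infinite order. With torsion in hand, the divisible factor $D$ in the decomposition $M=B*D$ of a connected nilpotent $M$ is a torsion divisible nilpotent group, hence abelian, and the divisible-abelian step kills it in one line---so your detour through $T(D)$ and $Z(D)^\circ$ becomes unnecessary.

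One small point worth tightening: you assert that the primary components $T_p$ and $T_{p'}$ of $T(A)$ (and later of $T(D)$) are ``definable and connected'' without justification. The cleanest way around this is the paper's: once Fact~\ref{trivialaction}(b) kills any $p$-torus (an elementwise argument, no definability needed), the group $A$ itself is $p^\perp$, and Fact~\ref{fact:good-torus} applies to all of $A$ rather than to a primary component.
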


\begin{proof} First note that by Fact~\ref{trivialaction}, $K$ contains no nontrivial $p$-tori.
Therefore, every connected definable solvable $p^\perp$-subgroup in $K$ is trivial by Fact \ref{fact:good-torus}.

Now it is easy to see that very definable divisible abelian subgroup in  $K$ is trivial.
Indeed, if such a subgroup, say $A$, contains a non-trivial $p$-element, then it contains a non-trivial $p$-torus, which is impossible by the above paragraph. Hence $A$ is a $p^\perp$-group and is trivial again by above.

Next, notice that every element  in  $K$ is of finite order.
Indeed, if $x\in K$ is of infinite order, then the connected component $d(x)^\circ$ of the definable closure of $\langle x\rangle$ is a divisible abelian group, which contradicts the above paragraph.

By Fact \ref{structure-of-nilpotent}, $M=BD$, where $B$ and $D$ are connected,  $B$ is of bounded exponent and $D$ is divisible. However, $D=1$ by above,  and $B$ is a $p$-group by Lemma \ref{2torus}. Therefore, every  definable connected nilpotent subgroup $M$  in  $K$ is a $p$-group of bounded exponent.

By Fact~\ref {structure-solvable},
if  $M$ is a connected solvable subgroup  in  $K$, then $M = [M,M] C$ where $C$ is a connected nilpotent subgroup.

Finally, we will prove that $K$ is of $p$-unipotent type.
Let $M$ be a definable connected solvable subgroup of  $K$. Then by above, $M = [M,M] C$ where $C$ is a connected nilpotent subgroup.  By Fact~\ref{commutator-nilpotent},
 $[M, M]$ is also connected and nilpotent. Hence both subgroups are $p$-groups of bounded exponent by above; therefore, so is $M$. Now the nilpotency of $M$ follows from Fact~\ref{solvnilp}.
\end{proof}



\subsection{Basis of Induction}
Connected groups acting faithfully and definably on abelian groups of Morley rank $n\leqslant 3$ are well understood. To prove these special cases of our theorem, the following results will be used.

\begin{fact} {\rm (Deloro)} {\rm \cite{Deloro09}} \label{deloro} Let\/ $G$ be a connected non-solvable group acting faithfully on a connected abelian group $V$. If $\rk(V)=2$, then there exists an \acf $K$ such that the action $G\curvearrowright V$ is equivalent to $\operatorname{GL}_2(K)\curvearrowright K^2$ or $\operatorname{SL}_2(K)\curvearrowright K^2$.
\end{fact}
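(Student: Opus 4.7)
My plan is to show $V$ is $G$-minimal, apply Zilber's Field Theorem to build an algebraically closed field $K$ with $V \cong K^2$, and identify $G$ inside $\operatorname{GL}_2(K)$. For the first step, suppose $W$ is a proper nontrivial connected $G$-invariant subgroup of $V$; then $\rk W = \rk V/W = 1$, so both $W$ and $V/W$ are connected abelian groups of rank 1. A standard consequence of the theory of strongly minimal groups together with Fact~\ref{automorphisms} is that any definable connected group of automorphisms of a rank-1 connected abelian group is abelian (the endomorphism ring is either $\mathbb Z$, making $\operatorname{Aut}_{\mathrm{def}}$ finite, or a rank-1 algebraically closed field, making $\operatorname{Aut}_{\mathrm{def}}$ isomorphic to $K^*$). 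Hence the images $\pi_1(G) \leqslant \operatorname{Aut}(W)$ and $\pi_2(G) \leqslant \operatorname{Aut}(V/W)$ are abelian, and the kernel $N$ of $(\pi_1,\pi_2) : G \to \operatorname{Aut}(W) \times \operatorname{Aut}(V/W)$ is identified, via $g \mapsto (v+W \mapsto g(v)-v)$, with a definable subgroup of $\operatorname{Hom}(V/W,W)$, and so is abelian. Thus $G$ is solvable, contradicting the hypothesis.

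Given $G$-minimality, I would pick a nontrivial definable connected abelian subgroup $A \leqslant G$ and choose an $A$-minimal subgroup $W_0 \leqslant V$. Fact~\ref{zilber} supplies an algebraically closed field $K$ with $W_0 \cong K^+$ and an embedding $A/C_A(W_0) \hookrightarrow K^*$. If $W_0 = V$ then $V \cong K^+$ with $\rk K = 2$, but the argument of the previous paragraph (applied to $V$ itself, with the roles of $W$ and $V/W$ dropped) shows any definable connected group of automorphisms of $V$ is abelian, contradicting non-solvability of $G$. Hence $\rk W_0 = 1$, and $G$-minimality of $V$ yields some $g \in G$ with $V = W_0 \oplus g W_0$; showing that the scalar actions on the two summands glue into a single $G$-compatible $K$-vector space structure on $V$ then gives $V \cong K^2$ and $G \leqslant \operatorname{GL}_2(K)$.

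Once $V \cong K^2$, the group $G$ is a connected definable non-solvable irreducible subgroup of $\operatorname{GL}_2(K)$; since the connected proper subgroups of $\operatorname{GL}_2(K)$ are either solvable (Borel subgroups, maximal tori, and their normalisers) or equal to $\operatorname{SL}_2(K)$, it follows that either $G = \operatorname{SL}_2(K)$ or $G = \operatorname{GL}_2(K)$. The main obstacle is the construction carried out in the second paragraph: building a $G$-invariant $K$-vector space structure on $V$ out of the $A$-equivariant structure on the single weight space $W_0$. The delicate point is ensuring that the fields extracted via Zilber from $W_0$ and from $g W_0$ are identified in the right way through the $G$-action, so that all conjugate weight spaces carry compatible scalar actions; this requires a careful structural analysis of the tori of $G$ and their intertwining by conjugation, and is the technical heart of Deloro's argument in \cite{Deloro09}.
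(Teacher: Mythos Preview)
The paper does not prove this statement: it is stated as a Fact with a citation to \cite{Deloro09} and no argument is supplied. There is thus no proof in the paper to compare your proposal against; what you have written is a partial reconstruction of Deloro's original argument.

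As such a reconstruction, your outline has the correct large-scale shape (establish $G$-minimality, interpret a field via Zilber on a weight space, linearise, then classify inside $\operatorname{GL}_2$), and you rightly identify the gluing of the Zilber fields on conjugate weight spaces into a single $G$-equivariant $K$-structure as the technical core. Two points need more care. In ruling out the case $W_0 = V$ you appeal to ``the argument of the previous paragraph'', but that paragraph treated rank-$1$ groups, whereas here $V \cong K^+$ with $\rk K = 2$; the abelianness of a connected definable automorphism group of such a $V$ is not what you established there, and a separate argument is required. And in the final step, passing from ``$G$ is a connected definable irreducible subgroup of $\operatorname{GL}_2(K)$'' to ``$G \in \{\operatorname{SL}_2(K),\operatorname{GL}_2(K)\}$'' via the algebraic subgroup lattice of $\operatorname{GL}_2$ tacitly assumes that definable subgroups are Zariski-closed, i.e.\ that $K$ carries no exotic definable structure; this holds in Deloro's setting but is not automatic and should be justified.
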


\begin{fact} {{\rm (Borovik--Deloro + Fr\'econ)} {\rm \cite{Borovik-Deloro}} and {\rm \cite{Frecon18}}.} \label{bordel} Let\/ $G$ be a connected non-solvable group acting faithfully and minimally on an abelian group $V\!$. If\/ $\rk(V)=3$  then there exists an \acf $K$ such that $V=K^3$ and $G$ is isomorphic to either\/ $\operatorname{PSL_2}(K)\times Z(G)$ or\/ $\operatorname{SL_3}(K)* Z(G)$. The action is the adjoint action in the former case, and the natural action in the latter case.
\end{fact}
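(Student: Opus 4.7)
The plan is to follow the strategy used in the classification of small-rank faithful representations: reduce to identifying an algebraic group inside $G$, then recognize the action. Since $G$ acts faithfully and minimally on $V$ with $\rk(V) = 3$, the generic orbit on $V$ has rank $3$, so a generic stabilizer has rank $\rk(G) - 3$. First I would bound $\rk(G)$ by exploiting the fact that $G$ is non-solvable: a minimal action on a connected abelian group forces $G/C_G(V) = G$ to be "rich enough" in Borel subgroups, and the standard dimension estimates for definable actions on $V$ produce a useful upper bound.

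Next, I would analyze the definable connected solvable subgroups (Borels) $B \leqslant G$ acting on $V$. Each such $B$ fixes a nontrivial connected definable subspace (by a rank computation and the minimality of $V$), and Zilber's Field Theorem (Fact~\ref{zilber}) applied to the action of a maximal torus of $B$ on a rank-$1$ weight subspace produces an algebraically closed field $K$ of Morley rank $1$. The goal from here is to promote this local field structure to a global one, i.e.\ to recognize $\operatorname{SL}_2(K)$ or a $K$-rational Lie algebra sitting inside $G$ and $V$.

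The main obstacle, and what makes this theorem hard, is eliminating \emph{bad groups}: simple groups of finite Morley rank whose definable connected proper subgroups are all nilpotent, since such groups possess no visible field and resist any attempt at identification. Here I would invoke Fr\'econ's theorem, which asserts the non-existence of bad groups of Morley rank $3$; without this input the argument stalls. Combined with the bound on $\rk(G)$, Fr\'econ's result forces every definable simple section of $G$ to have an algebraic structure recognizable from its Borels. Applying the Borovik--Deloro recognition machinery (the "uniqueness case" for $\operatorname{PSL}_2$), one then identifies $G/Z(G)$ as either $\operatorname{PSL}_2(K)$ or $\operatorname{PSL}_3(K)$.

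Finally, I would reconstruct the module structure: the only faithful rank-$3$ irreducible representations (in characteristic different from $2,3$ where torsion constraints from Section 4 live) are the adjoint representation of $\operatorname{PSL}_2(K)$ on $\mathfrak{sl}_2(K) \cong K^3$ and the natural representation of $\operatorname{SL}_3(K)$ on $K^3$. Fact~\ref{automorphisms} then rules out extra field automorphisms, and minimality plus faithfulness pin down $Z(G)$ so that $G \cong \operatorname{PSL}_2(K) \times Z(G)$ or $\operatorname{SL}_3(K) * Z(G)$ exactly as stated. I expect the Fr\'econ input to be the crux: every other step is a standard rank bookkeeping or field-identification argument, but without the non-existence of rank-$3$ bad groups the entire classification would collapse into an open configuration.
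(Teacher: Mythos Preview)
The paper does not give its own proof of this statement: it is recorded as a \emph{Fact}, attributed to Borovik--Deloro \cite{Borovik-Deloro} together with Fr\'econ \cite{Frecon18}, and is used only as a black-box input for the base case $n\leqslant 3$ of the main argument. There is therefore no proof in the paper to compare your proposal against.

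As a sketch of what happens in the cited works, your outline identifies the right ingredients---Zilber's field theorem to manufacture $K$, Fr\'econ's elimination of rank-$3$ bad groups to kill the pathological case, and the Borovik--Deloro recognition analysis to pin down $G/Z(G)$---but it is a narrative, not a proof. Several steps are imprecise or unjustified: minimality of the action does not by itself force a generic orbit of full rank; you never actually state the rank bound on $G$ you claim to derive; and the phrase ``recognition machinery'' stands in for a long case analysis that constitutes most of \cite{Borovik-Deloro}. Your parenthetical about ``characteristic different from $2,3$'' is also misplaced: the cited Fact carries no such restriction, and the torsion constraints in Section~4 of the present paper concern the ambient $p$-group $V$, not the characteristic hypotheses of the Borovik--Deloro theorem. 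If you intend to supply a self-contained proof, you would essentially be reproducing two substantial papers; the present paper reasonably treats the result as an external input.
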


\subsection{Throwback to pseudoreflection actions}

To exclude the case when $G$ in our Theorem \ref{maintheorem} is not connected, we will need a result which uses concepts from one of our earlier papers \cite{bbpseudo}. A special case of this result, when $G$ is connected, was stated as \cite[Corollary 1.3]{bbpseudo}.

\begin{proposition} \label{maximality-of-GL} Let $G$ be a group of finite Morley rank  acting definably and faithfully on an elementary abelian $p$--group $V$ of Morley rank $n$, where $p$ is an odd prime. Assume that  $G$ contains a definable subgroup $G^\sharp \simeq \mathop{\rm GL}_n(F)$ for an algebraically closed field $F$ of characteristic $p$. Assume also that $V$ is definably isomorphic to the additive group of the $F$-vector space  $F^n$ and $G^\sharp$ acts on $V$ as on its canonical module. Then  $G^\sharp =G$.
\end{proposition}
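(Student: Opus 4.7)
The plan is to reduce to the connected case already treated in \cite[Corollary 1.3]{bbpseudo} and then kill the remaining cosets of $G^\circ$ in $G$ by analysing the induced field automorphism on the centre of $G^\sharp$.

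Since $G^\sharp \cong \mathop{\rm GL}_n(F)$ is connected, $G^\sharp \leqslant G^\circ$. The subgroup $G^\circ$ is itself a connected definable group of finite Morley rank acting definably and faithfully on $V$, and it still contains the prescribed copy of $\mathop{\rm GL}_n(F)$ acting canonically on $V \cong F^n$. Applying \cite[Corollary 1.3]{bbpseudo} to $G^\circ$ in place of $G$ therefore yields $G^\circ = G^\sharp$, and it remains to prove $G = G^\circ$.

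Fix $g \in G$. Since $G^\circ$ is normal in $G$, conjugation by $g$ is a definable automorphism of $G^\sharp$ and in particular normalises the centre $Z(G^\sharp) = \{\, s_\lambda : \lambda \in F^*\,\}$ of scalar matrices. This induces a definable group automorphism $\sigma$ of $F^*$ via $g s_\lambda g^{-1} = s_{\sigma(\lambda)}$, and for $v \in V$ and $\lambda \in F^*$ one computes
\[
g(\lambda v) \;=\; g\, s_\lambda v \;=\; (g s_\lambda g^{-1})\, g(v) \;=\; \sigma(\lambda)\, g(v),
\]
so $g$ acts on $V$ as a $\sigma$-semilinear map. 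Comparing this with the additivity of $g$ forces $\sigma$ to be additive, so (setting $\sigma(0) = 0$) $\sigma$ extends to a field automorphism of $F$.

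Inner automorphisms by elements of $G^\sharp$ act trivially on $Z(G^\sharp)$, so the assignment $g \mapsto \sigma$ factors through the finite quotient $G/G^\circ$, and in particular $\sigma$ has finite order. Thus $\langle \sigma \rangle$ is a finite, hence definable, group of automorphisms of the infinite field $F$, and Fact~\ref{automorphisms} forces $\sigma = \operatorname{id}$. Therefore $g$ acts $F$-linearly on $V$ and so lies in $\mathop{\rm GL}(V) = G^\sharp$, as required. The only substantive step is the connected reduction; everything after it is a brief semilinearity computation combined with the rigidity of definable field automorphisms.
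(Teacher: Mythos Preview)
Your proof is correct and takes a genuinely different route from the paper's. The paper does \emph{not} black-box the connected case: instead it shows directly that $G^\sharp$ is normal in $G$ (by observing that $\langle R^G\rangle$ is generated by pseudoreflection subgroups and invoking \cite[Theorem~1.2]{bbpseudo}), and then proceeds by induction on $n$ via the Frattini argument $G = G^\sharp N_G(R)$, reducing to the action of $N_G(R)/R$ on $C_V(R)\simeq F^{n-1}$. Your argument, by contrast, invokes \cite[Corollary~1.3]{bbpseudo} once to obtain $G^\circ = G^\sharp$, and then handles the finite extension $G/G^\circ$ by the semilinearity computation on $Z(G^\sharp)\simeq F^*$ combined with Fact~\ref{automorphisms}. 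This is shorter and more conceptual, and it makes transparent exactly where the field structure intervenes; the paper's inductive argument, on the other hand, is more self-contained in that it only needs the deeper \cite[Theorem~1.2]{bbpseudo} rather than its corollary, and it uniformly treats the connected and disconnected cases in one sweep.

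One small point worth making explicit in your write-up: when you assert that additivity of $g$ forces $\sigma$ to be additive, the verification uses a nonzero vector $v$ with $g(v)\ne 0$ and also the case $\lambda+\mu=0$, where one needs $\sigma(-1)=-1$; this holds because $-1$ is the unique involution in $F^*$ (here $p$ is odd) and $\sigma$ is a group automorphism.
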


\begin{proof}
Observe first that  $\rk F^n = n$ implies $\rk F= 1$. \emph{Pseudoreflection subgroups} in the sense of \cite{bbpseudo} are connected  definable abelian subgroups $R <G^\sharp$ such that $V = [V,R] \oplus C_V(R)$ and $R$ acts transitively on the non-zero elements of $[V,R]$. By Fact~\ref{zilber}, one can immediately conclude that $R\simeq F^*$ and $[V,R]\simeq F^+$. Therefore $\rk R = 1=\rk [V,R]$ in our case.

It is easy to see that pseudoreflection subgroups in $G^\sharp = \mathop{\rm GL}_n(F)$ are $1$-dimensional (in the sense of the theory of algebraic groups) tori of the form, in a suitable coordinate system in $F^n$,
\[
R = \left\{\, \mathop{\rm diag}(x, 1,\dots, 1) \mid x  \in  F, x\ne 0\,\right\},
\]
and all pseudoreflection subgroups in $G^\sharp$  are conjugate in $G^\sharp$.

If $R$ is a pseudoreflection subgroup in $G^\sharp$, consider the subgroup $\langle R^G \rangle$ generated in $G$ by all $G$-conjugates of $R$, this is a normal definable subgroup generated by pseudoreflection subgroups, and, in view of \cite[Theorem 1.2]{bbpseudo},  $G^\sharp = \langle R^G \rangle$ is normal in $G$.

We will use induction on $n\geqslant 1$. When $n=1$,  $G^\sharp=R\simeq F^*$ and $V=[V,R]\simeq F^+$. The subring generated by $R$ in the ring of definable endomorphisms of $V$ is a field by Schur's Lemma, which we will denote by $E$. Since $G$ normalizes $R$, $G$ acts as a group of field automorphisms on $E$. Hence by Fact~\ref{automorphisms},  $G=C_G(R)$. Thus, $G$ acts linearly on $V\simeq F^+$ and therefore, $G=F^*=R=G^\sharp$.

Now assume $n\geqslant 2$. By the Frattini Argument, $G= G^\sharp N_G(R)$. Denote $H = N_G(R)$ and $H^\sharp = N_{G^\sharp}(R)$. It is well-known that $H^\sharp = R \times L$, where $L\simeq \mathop{\rm GL}_{n-1} (F)$ centralises $[V,R]$ and acts on $C_V(R) \simeq F^{n-1}$ as on a canonical module.  Obviously, $H$ leaves $[V,R]$ and $C_V(R)$ invariant. 
Note that the action of $H/R$ on $C_V(R)$ is faithful. Indeed, if $K/R$ is the kernel, then $K$ acts faithfully on $[V,R]$ with a normal subgroup $R\cong F^*$. This brings us to the base of induction which was discussed above. Hence $K=R$.   Thus,  $H/R$ contains a definable subgroup $H^\sharp/R=(R\times L)/R \simeq  \mathop{\rm GL}_{n-1} (F)$; by the inductive assumption, $H/R = H^\sharp/R$, hence $H =H^\sharp$. Therefore, $G= G^\sharp H=G^\sharp H^\sharp=G^\sharp$. \end{proof}

\section{Proof of Theorem \ref{maintheorem}}

We present a complete proof of Theorem \ref{maintheorem} in this section. Therefore, we work under the following assumptions.

We have a group of  \fmr $G$ acting definably, faithfully, and generically $m$-transitively on a connected elementary abelian $p$-group $V$ of  Morley  rank $n$, with $p$ an odd prime, and $m\geqslant n$.

First note that $V$ is connected by Fact~\ref{connectednessofV}. Another crucial observation is that $G^\circ$ also acts definably, faithfully, and generically $m$-transitively on $V$ by Fact~\ref{connectedG}. Therefore, in view of Proposition \ref{maximality-of-GL}, it will suffice to prove Theorem \ref{maintheorem} in the special case when $G=G^\circ$ is connected.

Therefore, from now on we assume that $G$ is connected.


\subsection{The core configuration}

We will focus now on a group-theoretic configuration at the heart of Theorem \ref{maintheorem}.

The generic $m$-transitivity of $G$ on $V$ means that there is a  generic subset  $A$  of $V^m$ on which $G$ acts transitively. We fix  an element $\bar{a}=(a_1,\dots,a_m)\in A$, and denote
\[
V_0 =  \langle a_1, \dots, a_m\rangle.
\]

From now on, we denote by   $K$ the pointwise stabilizer, and by $H$ the setwise stabilizer of $\{\pm a_1,\dots,\pm a_m\}$ in $G$.

In $\overline{H} = H/K$ we have $m$ involutions $\bar{e}_i$, $i=1,\dots, m$ defined by their action on $a_1,\dots, a_m$:
\[
a_j^{\bar{e}_i} = \left\{\begin{array}{rl}
    -a_j & \mbox{ if } i=j, \\
    a_j  & \mbox{ otherwise.}
    \end{array} \right.
\]

\begin{lemma} The group $\overline{E}_m = \langle\bar{e}_1,\dots,\bar{e}_m\rangle$ is an elementary abelian group of order $2^m$ and    $H/K\simeq  \Sigma_m= \overline{E}_m \rtimes {\rm Sym}_m$, where ${\rm Sym}_m$ permutes the generators $\bar{e}_1,\dots,\bar{e}_m$ of $\overline{E}_m$.
	\label{hyperoctahedral}
\end{lemma}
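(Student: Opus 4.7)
The plan is to strengthen the choice of $\bar{a}$, analyse the natural action homomorphism from $H$ to the symmetric group on $\{\pm a_1,\dots,\pm a_m\}$, and read off both claims of the lemma from the resulting isomorphism.

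First I would replace $\bar{a}$ by a more generic representative. The hyperoctahedral group $\Sigma_m$ acts on $V^m$ via coordinate permutations and coordinatewise sign changes; each such $\sigma$ is a definable bijection of $V^m$, so $\sigma^{-1}(A)$ is generic. Thus
\[
A' = \bigcap_{\sigma\in\Sigma_m}\sigma^{-1}(A)
\]
is generic in $V^m$ as a finite intersection of generic subsets, and we may assume from the start that $\bar{a}\in A\cap A'$. Then for every $\sigma\in\Sigma_m$ the tuple $\sigma(\bar{a})$ also lies in $A$, so the transitivity of $G$ on $A$ furnishes $g_\sigma\in G$ with $g_\sigma\bar{a}=\sigma(\bar{a})$.

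Next I would set up the action homomorphism. Since $p$ is odd, $V$ has no $2$-torsion, so $\{\pm a_1,\dots,\pm a_m\}$ has exactly $2m$ elements. The action of $H$ on this set yields a homomorphism $\varphi\colon H\to\mathrm{Sym}_{2m}$ whose kernel is exactly $K$ by definition. Because elements of $H$ act on $V$ by group automorphisms, the involution $x\mapsto -x$ is preserved and hence the pairs $\{a_i,-a_i\}$ are permuted as blocks, so $\varphi$ factors through $\Sigma_m\leqslant\mathrm{Sym}_{2m}$. Surjectivity of $\varphi$ onto $\Sigma_m$ is immediate from the previous paragraph via the elements $g_\sigma$, so $\varphi$ induces an isomorphism $H/K\simeq\Sigma_m$.

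Both conclusions of the lemma now follow from this isomorphism. Each $\bar{e}_i$ maps under $\varphi$ to the coordinate sign-change at position $i$, so $\overline{E}_m$ corresponds to the standard elementary abelian normal subgroup $\mathbb{Z}_2^m\leqslant\Sigma_m$ and is therefore elementary abelian of order $2^m$; the semidirect product decomposition $\Sigma_m=\overline{E}_m\rtimes{\rm Sym}_m$ is simply the standard hyperoctahedral decomposition pulled back along $\varphi$. The only mildly subtle point in the whole argument is the initial reinforcement of the genericity of $\bar{a}$; everything else is formal bookkeeping once enough symmetry is available in $H$.
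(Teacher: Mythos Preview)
Your argument is correct and follows essentially the same route the paper invokes: the paper's own proof simply says that the argument of \cite[Lemma~3.1]{bbsharp} goes through here, and that argument is precisely the genericity-strengthening trick you carry out (intersect $A$ with its finitely many $\Sigma_m$-translates, then realise each $\sigma\in\Sigma_m$ by some $g_\sigma\in G$ and check $g_\sigma\in H$). One small point worth tightening: to conclude that $\{\pm a_1,\dots,\pm a_m\}$ has exactly $2m$ elements you need not only $p$ odd but also $a_i\ne\pm a_j$ for $i\ne j$, which follows from the genericity of $\bar a$ (the locus of such coincidences has rank $<mn$) and can be folded into your definition of $A'$.
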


Notice that the group $\Sigma_m$ is the \emph{hyperoctahedral} group of degree $m$, which  prominently features in the theory of algebraic groups as
 the reflection group of type ${\rm BC}_m$. This fact is not used in this paper, but is likely to pop up in some of our future work.

\begin{proof}
Since $G$ acts generically $m$-transitively on $V$, the proof of \cite[Lemma~3.1]{bbsharp} can be repeated in this context as well, and we obtain the desired result.
\end{proof}

\subsection{Essential subgroups and ample subgroups}

Let $D$ be the full preimage in $H$ of the subgroup $\overline{E}_m < H/K$. At this point, we temporarily forget about the ambient group $G$ and generic transitivity and focus on the group $H$ and its subgroups $D$ and $K$.

For a subgroup $X \leqslant H$, we denote $X_D = X\cap D$ and $X_K = X \cap K$.

We shall call a definable subgroup $X \leqslant H$ \emph{ample}  if $KX=H$.

A definable subgroup $X \leqslant D$ is \emph{essential } if $KX = D$.
Equivalently, a definable subgroup  $X< G$  is essential if
\begin{itemize}
\item $X$ leaves invariant the set $ \{\, \pm a_1, \dots \pm a_m\,\}$ (which is equivalent to $X \leqslant H$), and, consequently, the subgroup $V_0$;
\item  $X_K$ is the pointwise stabiliser of $\bar{a}$ in $X$ (and consequently $X_K= C_X(V_0)$), and $X/K_X \simeq  \overline{E}_m $ acts on $V_0$ as on the canonical module $\mathbb{Z}_p^m$ for $\overline{E}_m$ and leaves invariant subgroups
    \[
    A_1=\langle a_1 \rangle, \dots, A_m=\langle a_m \rangle.
    \]
\end{itemize}

Notice that $X=H$ is an ample subgroup. Obviously, if $X$ is  ample, then $X_D$ is essential.

The following lemma summarises application of representation theory of finite groups in our context.

\begin{lemma} \label{key-Maschke}
If $X$ is a finite essential subgroup and $X_K$ is a $p'$-group, then $X_K =1$. Also, in that case $m=n$.
\end{lemma}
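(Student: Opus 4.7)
The plan is to apply Theorem~\ref{th:Maschke} to the action of $X$ on $V$ and to identify the $m$ subgroups $A_1,\dots,A_m$ as $m$ pairwise non-isomorphic simple $R$-submodules, which will force $n\geqslant m$ and hence (combined with the standing assumption $m\geqslant n$) the equality $m=n$. The equality $X_K=1$ will then follow by a weight-space argument.

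First I would observe that, under the hypotheses, the whole group $X$ is a finite $p'$-group: indeed, $X/X_K\simeq \overline{E}_m$ is a finite $2$-group, $X_K$ is a finite $p'$-group by assumption, and $p$ is odd. So the enveloping algebra $R$ generated by $X$ in $\mathop{\rm End}V$ falls under the scope of Theorem~\ref{th:Maschke}. Next I would verify that the cyclic subgroups $A_i=\langle a_i\rangle$, for $i=1,\dots,m$, are simple $R$-submodules of $V$: each $A_i$ has order $p$, it is $X$-invariant because $X$ is essential (so it leaves each $A_i$ setwise invariant), and it is simple for dimension reasons.

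The key point is that $A_1,\dots,A_m$ are pairwise non-isomorphic as $R$-modules. Since $X_K$ acts trivially on every $a_i$ and hence on every $A_i$, the $X$-action on $A_i$ factors through $X/X_K\simeq \overline{E}_m$ and is given by the character $\rho_i\colon \overline{E}_m\to\{\pm 1\}$ with $\rho_i(\bar e_j)=(-1)^{\delta_{ij}}$. These $m$ characters are distinct, so the modules $A_1,\dots,A_m$ are pairwise non-isomorphic even as $\overline{E}_m$-modules, hence also as $R$-modules. Theorem~\ref{th:Maschke} therefore gives $\rk V\geqslant m$; together with $m\geqslant n=\rk V$ this yields $m=n$.

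For the statement $X_K=1$, I would use the decomposition $V=V_1\oplus\cdots\oplus V_m$ provided in the proof of Theorem~\ref{th:Maschke}, where $V_i$ is the definable $R$-submodule consisting of those $v$ with every simple submodule of $vR$ isomorphic to $A_i$. By Maschke applied to finite $R$-submodules, each $vR\subseteq V_i$ is a direct sum of copies of $A_i$, so $X$ acts on each element of $V_i$ via the character $\rho_i$; equivalently, $V_i$ is the $\rho_i$-weight space of $X$. Since $X_K\leqslant\ker\rho_i$ for every $i$, the subgroup $X_K$ acts trivially on each $V_i$, hence on $V$. Faithfulness of the $G$-action on $V$ then forces $X_K=1$.

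The main thing to be careful about is the last step: one must check that the $X$-action on the \emph{entire} component $V_i$, and not just on finite submodules, really reduces to the character $\rho_i$. This is why I would spell out that every element of $V_i$ lies in a finite (and hence semisimple) cyclic $R$-submodule $vR\subseteq V_i$, on which $X$ acts by $\rho_i$; the remainder of the argument is then formal.
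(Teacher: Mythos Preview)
Your proof is correct and follows essentially the same route as the paper's: show $X$ is a finite $p'$-group, observe that the $A_i$ are pairwise non-isomorphic simple $R$-modules (via the distinct sign characters of $\overline{E}_m$), invoke Theorem~\ref{th:Maschke} to get $n\geqslant m$ and hence $m=n$, and then use the isotypic decomposition $V=V_1\oplus\cdots\oplus V_n$ to conclude that $X_K$ acts trivially on $V$. Your write-up is in fact slightly more explicit than the paper's, particularly in spelling out why the action on each $V_i$ is really through the character $\rho_i$.
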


\begin{proof}
Since $X_K$ is a $p'$-group,  $X$ is also  a $p'$-group because $p\ne 2$ and $X/X_K$ is a $2$-group which covers $D/K = \overline{E}_m$.

Let $R$ be the enveloping algebra of $X$.

Notice that  $X$ (hence $R$) acts on each subgroup $A_1, \dots ,A_m$ irreducibly. Moreover, each representation is different, because the $A_i$'s are cyclic groups of order $p$, and, among the elements $\bar{e}_1,\dots,\bar{e}_m$, only $\bar{e}_i$ inverts $A_i$. By Theorem \ref{th:Maschke}, $m=n$ and $V = V_1\oplus \cdots\oplus V_n$, where in the modules $V_i$ each simple $R$-submodule is isomorphic to $A_i$. But $X_K$ acts trivially on each $A_i$, hence acts trivially on each $V_i$ and therefore on $V$. This means $X_K=1$.
\end{proof}

Notice that in the next lemma we do not assume that $X$ is finite, and therefore we continue to accept the possibility that $m > n$.

\begin{lemma} \label{lem:K-odd}
   If $X$ is an essential subgroup then $X_K$ is a $2^\perp$-group.
   \end{lemma}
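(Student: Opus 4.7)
The plan is to argue by contradiction. Suppose $X_K$ contains a non-trivial $2$-element; passing to a suitable power we may assume $X_K$ contains an involution $t$. I will derive a contradiction by producing a finite essential $2$-subgroup $Y$ of $X$ with $t \in Y_K$, and then invoking Lemma~\ref{key-Maschke} applied to $Y$ (which qualifies because such a $Y$ is a $p'$-group, since $p$ is odd) to force $Y_K = 1$.

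The heart of the argument is Sylow-theoretic. First I would take a Sylow $2$-subgroup $S$ of $X$ containing $t$, which exists by Fact~\ref{sylow2}; the same fact tells me that $S$ is locally finite. The first real step is to prove that $S$ projects onto $X/X_K \cong \overline{E}_m$. Since $X/X_K$ is finite and $X^\circ$ is connected, $X^\circ$ must lie in $X_K$, so $X/X_K$ is a quotient of the finite group $X/X^\circ$. In this finite group, standard Sylow theory shows that any Sylow $2$-subgroup maps onto the $2$-group quotient $\overline{E}_m$, and in \fmr\ the image of $S$ in $X/X^\circ$ is a Sylow $2$-subgroup of $X/X^\circ$; combining these, $S$ maps onto $\overline{E}_m$.

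Once this is set up, I would pick $s_i \in S$ lifting $\overline{e}_i$ for $i = 1, \dots, m$ and set $Y = \langle t, s_1, \dots, s_m \rangle$. Being finitely generated inside a locally finite $2$-group, $Y$ is a finite $2$-subgroup of $X$, hence a $p'$-group; it contains lifts of all $\overline{e}_i$, so it maps onto $\overline{E}_m$ and is therefore essential; and $Y_K \supseteq \langle t \rangle$ is non-trivial. Lemma~\ref{key-Maschke} applied to $Y$ then forces $Y_K = 1$, the required contradiction, so $X_K$ has no involutions and hence (taking $2$-power parts) no non-trivial $2$-element.

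The step I expect to require the most care is the Sylow-covering claim, i.e.\ that a Sylow $2$-subgroup of $X$ really does project onto the finite $2$-quotient $X/X_K$. This is the \fmr\ incarnation of the standard statement that Sylow subgroups surject onto Sylow subgroups of a quotient, made tractable here by the observation $X^\circ \leq X_K$ that reduces the question to a purely finite-group one inside $X/X^\circ$; once it is in hand, the rest of the proof is essentially an invocation of Lemma~\ref{key-Maschke}.
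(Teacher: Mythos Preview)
Your proof is correct and follows essentially the same route as the paper: locate a nontrivial $2$-element of $X_K$ inside a Sylow $2$-subgroup $S$ of $X$, use local finiteness of $S$ (Fact~\ref{sylow2}) to produce a finite essential $2$-subgroup $Y$ containing it, and invoke Lemma~\ref{key-Maschke} to force $Y_K=1$. The only difference is one of exposition: the paper simply asserts the existence of $s_1,\dots,s_m\in S$ mapping onto the generators of $X/X_K\simeq\overline{E}_m$, whereas you supply a justification via $X^\circ\leqslant X_K$ and finite Sylow theory in $X/X^\circ$.
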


   \begin{proof}
     Let $S$ be a Sylow $2$-subgroup in $X$. If $X_K$ is not a $2^\perp$-group, then $X_K\cap S \ne 1$. Take a non-trivial element $s \in X_K\cap S$ and elements $s_1,\dots, s_m$ in $S$ whose images in $X/X_K$ generate $X/X_K \simeq \overline{E}_m$. By Fact~\ref{sylow2}, $S$ is locally finite, hence $s, s_1,\dots,s_m$ generate a finite $2$-subgroup, say $Y$, in $S$. Obviously $X_KY=X$ hence $Y$ is essential and $s=1$ by Lemma \ref{key-Maschke}, a contradiction.
   \end{proof}

We can now characterise essential subgroups.

\begin{lemma} \label{essential-and Sylows} Let $E$ be a Sylow $2$-subgroup in $D$, then $KE=D$, $E_K=1$, and  $E\simeq \overline{E}_m$. In particular, $E$ is an essential subgroup.

Moreover, essential subgroups of $D$ are exactly those definable subgroups which contain one of the  Sylow $2$-subgroups of $D$.
\end{lemma}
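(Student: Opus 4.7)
Plan. First, I would observe that $D$ is itself trivially essential (since $KD=D$), so Lemma~\ref{lem:K-odd} yields that $K=D_K$ is a $2^\perp$-group. Consequently, for any Sylow $2$-subgroup $E$ of $D$, the intersection $E_K=E\cap K$ is trivial, and the quotient map restricts to an injection $E\hookrightarrow D/K=\overline{E}_m$, forcing $E$ to be elementary abelian of exponent $2$ with $|E|\leqslant 2^m$.

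My next step would be to rule out an infinite $E$. By Fact~\ref{sylow2}, $E^\circ=U*T$ with $U$ a $2$-unipotent group and $T$ a $2$-torus. Applying Fact~\ref{ptori} inside $V\rtimes G$, with $V$ an elementary abelian odd $p$-group (and so containing no $2$-unipotent subgroup), every definable connected $2$-group of bounded exponent in $G$ centralises $V$ and is therefore trivial by faithfulness; so $U=1$. The $2$-torus $T\leqslant E$ has exponent dividing $2$, but a divisible $2$-group of exponent $2$ is trivial (any $x\in T$ admits $y\in T$ with $y^2=x$, while $y^2=1$), so $T=1$ and $E^\circ=1$. Thus $E$ is a finite elementary abelian $2$-group of order at most $2^m$.

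The main step is to show $|E|=2^m$, equivalently $KE=D$. I would appeal to a Schur--Zassenhaus-type splitting in finite Morley rank, applied to the definable extension $1\to K\to D\to\overline{E}_m\to 1$ with $2^\perp$ kernel $K$ and finite $2$-group quotient: this produces a complement $E^*\leqslant D$ with $E^*\simeq\overline{E}_m$, i.e.\ a $2$-subgroup of $D$ of order $2^m$. Because every $2$-subgroup of $D$ has order at most $2^m$ (from the injection $E\hookrightarrow D/K$ above), $E^*$ is maximal among $2$-subgroups and hence a Sylow $2$-subgroup of $D$. Conjugacy of Sylow $2$-subgroups (Fact~\ref{sylow2}) then forces every Sylow $2$-subgroup $E$ of $D$ to satisfy $|E|=2^m$, $E\simeq\overline{E}_m$, and (since $K\unlhd D$) $KE=D$; in particular, each such $E$ is essential.

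For the ``moreover'' part, one direction is immediate: if a definable $X\leqslant D$ contains some Sylow $2$-subgroup $E$ of $D$, then $KX\supseteq KE=D$, making $X$ essential. Conversely, if $X$ is essential then $X_K$ is $2^\perp$ by Lemma~\ref{lem:K-odd}, and repeating the Schur--Zassenhaus argument inside $X$ (now applied to $1\to X_K\to X\to\overline{E}_m\to 1$, using $X/X_K\simeq XK/K=D/K=\overline{E}_m$) produces a $2$-subgroup $P\leqslant X$ of order $2^m$, which is necessarily a Sylow $2$-subgroup of $D$ by order comparison. The main technical obstacle is invoking the correct Schur--Zassenhaus-type splitting in the finite Morley rank setting; its applicability rests on the ``coprime'' structure ($K$ is $2^\perp$ while $D/K$ is a finite $2$-group) together with standard Sylow theory for groups of finite Morley rank.
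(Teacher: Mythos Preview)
Your proposal reaches the right conclusions, but by a noticeably longer route than the paper. The paper's argument is essentially two lines: since $E$ is a Sylow $2$-subgroup of $D$, its image $KE/K$ is a Sylow $2$-subgroup of the quotient $D/K\simeq\overline{E}_m$; but $D/K$ is itself a finite $2$-group, so $KE/K=D/K$, i.e.\ $KE=D$. Thus $E$ is essential, Lemma~\ref{lem:K-odd} makes $E_K$ a $2^\perp$-group, hence $E_K=1$ (as $E$ is a $2$-group), and $E\simeq E/E_K\simeq D/K\simeq\overline{E}_m$. The ``moreover'' clause follows by running the same Sylow-image argument inside an essential $X$.

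By contrast, you first show $K$ is $2^\perp$, bound $|E|\leqslant 2^m$, then argue separately that $E$ is finite (redundantly: $|E|\leqslant 2^m$ already gives this, so the whole paragraph analysing $E^\circ=U*T$ is unnecessary), and finally invoke a Schur--Zassenhaus-type splitting to produce one complement of order $2^m$, combined with conjugacy of Sylow $2$-subgroups. This is not wrong, but the ``Schur--Zassenhaus'' you need---a complement to a definable $2^\perp$ normal subgroup with finite $2$-group quotient---is, in the finite Morley rank setting with a possibly non-solvable kernel, not quite off-the-shelf; what actually makes it work here is precisely the standard fact that Sylow $2$-subgroups surject onto Sylow $2$-subgroups of quotients, which is the one-line ingredient the paper uses directly. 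So your approach is valid but less economical, and its key step is really the paper's step in disguise.
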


\begin{proof} Since $E$ is a Sylow 2-subgroup in $D$, so is $KE/K$ in $D/K\simeq \overline{E}_m$, thus $D=KE$, that is, $E$ is essential. By Lemma~\ref{lem:K-odd}, $E_K$ is a $2^\perp$-group, hence it is trivial. Since $K/E_K\simeq\overline{E}_m$, the first statement follows. The second statement is clear.\end{proof}

Now we obtain the equality $m=n$ in the general case.

\begin{lemma} \label{m=n} We have
$m=n$.
\end{lemma}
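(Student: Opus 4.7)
The plan is that the equality $m=n$ follows almost immediately from the two previous lemmas, so the bulk of the work has already been invested in setting up the essential/ample framework and the Maschke-type Theorem \ref{th:Maschke}.

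First I would invoke Lemma \ref{essential-and Sylows} to produce a Sylow $2$-subgroup $E$ of $D$ which is essential in $D$ and satisfies $E\simeq \overline{E}_m$. In particular $E$ is a finite elementary abelian $2$-group of order $2^m$, and $E_K = E\cap K = 1$ by the same lemma.

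Next I would apply Lemma \ref{key-Maschke} to $X=E$: since $E$ is a finite essential subgroup whose intersection with $K$ is trivial (hence vacuously a $p'$-group, as $p$ is odd), Lemma \ref{key-Maschke} yields $m=n$ directly.

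As an alternative closing move, one could bypass the enveloping-algebra formulation and argue via Fact \ref{weightspaces}: the triviality of $E_K$ means $E\simeq \overline{E}_m$ acts faithfully on $V$, and since $p$ is odd the connected abelian group $V$ contains no involutions, so Fact \ref{weightspaces} with $m\geqslant n = \rk(V)$ forces $m=n$ and even produces the weight-space decomposition.

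The main obstacle here is not in this lemma itself but in the preceding groundwork, namely establishing Lemma \ref{essential-and Sylows} (which required Lemma \ref{lem:K-odd} and hence Lemma \ref{key-Maschke}) to guarantee that a Sylow $2$-subgroup of $D$ embeds as a complement to $K$ of the right order; once this is in hand, the equality $m=n$ is essentially a one-line corollary.
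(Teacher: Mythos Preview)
Your proposal is correct and matches the paper's own proof: the paper likewise invokes Lemma~\ref{essential-and Sylows} to produce the elementary abelian $2$-subgroup $E$ of order $2^m$, and then cites Fact~\ref{weightspaces} (or Theorem~\ref{th:Maschke}) to conclude $m=n$. Your primary route through Lemma~\ref{key-Maschke} is just the Theorem~\ref{th:Maschke} option in a slightly repackaged form, and your alternative via Fact~\ref{weightspaces} is exactly the paper's other option.
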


\begin{proof} Since $H$ contains an elementary abelian 2-subgroup of order $2^m$ by Lemma~\ref{essential-and Sylows}, Fact~\ref{weightspaces} (or  Theorem \ref{th:Maschke}) gives us $m=n$.	\end{proof}

   \begin{lemma} \label{no-tori-in-K}
   $K$ contains no good tori.
   \end{lemma}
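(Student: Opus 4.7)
The plan is to suppose for contradiction that $K$ contains a non-trivial good torus and to construct a finite essential $p'$-subgroup of $D$ meeting $K$ non-trivially, contradicting Lemma~\ref{key-Maschke}.

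First I would produce a good torus of $K^\circ$ that is $E$-invariant. Since $K\trianglelefteq H$, the connected component $K^\circ$ is also normal in $H$ and is therefore normalized by $E\leqslant H$. Taking $T$ to be the maximal good torus of $K^\circ$ and appealing to the standard theory of good tori in connected groups of \fmr (conjugacy of maximal good tori, together with a Frattini/Sylow-style argument on the action of the finite $2$-group $E$ on the set of their translates), I would arrange that $E$ normalizes $T$. Next I would show that $T$ is a $p^\perp$-group: its $p$-primary part is a $p$-torus, which acts trivially on $V$ by Fact~\ref{trivialaction}(b), and must therefore be trivial by the faithfulness of the action of $G$ on $V$.

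Assuming $T\neq 1$, since a good torus is the definable hull of its torsion, $T$ contains a torsion element $t$ of some prime order $q$; by the previous step $q\ne p$. Because $E$ normalizes the abelian group $T$, the subgroup $\langle t^E\rangle\leqslant T$ is an $E$-invariant subgroup generated by finitely many elements of order $q$ in an abelian group, hence a finite elementary abelian $q$-group. Set $X:=\langle t^E\rangle\cdot E$: this is a finite definable subgroup of $D$ (since $t\in K\leqslant D$ and $E\leqslant D$ by Lemma~\ref{essential-and Sylows}), and the identity $KX\supseteq KE=D$ shows that $X$ is essential. As $X$ is a $\{2,q\}$-group with $p\notin\{2,q\}$, it is a $p'$-group, and so is $X_K$. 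Lemma~\ref{key-Maschke} then forces $X_K=1$, contradicting $t\in X\cap K=X_K$.

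The main technical obstacle is the first step: extracting a non-trivial $E$-invariant good torus from $K^\circ$. If the maximal good torus of $K^\circ$ is characteristic, this is immediate; otherwise one must invoke conjugacy of maximal good tori and a Sylow-theoretic fixed-point argument for the action of $E$ on their set. Once an $E$-invariant good torus is in hand, the remainder is a direct application of the essential-subgroup machinery already developed.
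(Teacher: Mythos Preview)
Your proposal is correct and follows essentially the same route as the paper: assume a non-trivial good torus $T$ in $K$, arrange that a Sylow $2$-subgroup $E$ of $D$ normalises it, extract a finite $E$-invariant $q$-subgroup of $T$ with $q\ne p$, and apply Lemma~\ref{key-Maschke} to the resulting essential $p'$-group. The only cosmetic difference is that the paper runs the Frattini argument in the other direction---fixing a maximal good torus $T$ of $K$ first and deducing that $N_D(T)$ is essential, hence contains some Sylow $2$-subgroup $E$ by Lemma~\ref{essential-and Sylows}---which sidesteps the ``technical obstacle'' you flag; and the paper uses the (characteristic, hence automatically $E$-invariant) maximal elementary abelian $q$-subgroup of $T$ in place of your $\langle t^E\rangle$.
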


   \begin{proof}
     Assume the contrary, and let $T$ be a maximal good torus in $K$. By conjugacy of maximal good tori  \cite[Proposition IV.1.15]{abc}, and the Frattini Argument, we have $D = KN_D(T)$ and therefore $N_D(T)$ is an essential subgroup and contains a Sylow $2$-subgroup $E$ of $D$. Let $q$ be a prime such that $T$ has $q$-torsion and $Q$ the maximal elementary abelian $q$-subgroup in $T$. Obviously, $E$ normalises $Q$ and $QE$ is an essential subgroup. By Lemma \ref{key-Maschke}, $Q=1$, a contradiction.
   \end{proof}

\begin{lemma}
$K$ is a torsion group of $p$-unipotent type.
\end{lemma}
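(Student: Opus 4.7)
The plan is to reduce the statement to Proposition \ref{p-unipotent type} applied to $K$. That proposition is tailor-made for this setting: under the running hypotheses of Section 4 (which are in force here), every definable subgroup of $G$ containing no good tori is automatically a torsion group of $p$-unipotent type. So the entire proof amounts to verifying the two hypotheses of that proposition for $K$.

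First I would observe that $K$ is indeed definable: by construction $K$ is the pointwise stabiliser in $G$ of the finite tuple $(a_1,\dots,a_m)\in V^m$, which is a first-order definable condition. Second, I would invoke Lemma \ref{no-tori-in-K}, which says precisely that $K$ contains no good tori. Since $G$ and $V$ satisfy the blanket assumptions of Section 4, the hypotheses of Proposition \ref{p-unipotent type} are all met for $K$, and the conclusion follows at once.

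Frankly, there is no serious obstacle at this point; the statement is essentially a repackaging of Proposition \ref{p-unipotent type} in the concrete case of our distinguished subgroup $K$. All of the structural work has already been done: the control of $q$-unipotent subgroups for $q\ne p$ (Lemma \ref{2torus}), the triviality of $p$-tori acting on $V$ (Fact \ref{trivialaction}(b)), the structure theorems for nilpotent and solvable groups of finite Morley rank (Facts \ref{structure-of-nilpotent} and \ref{commutator-nilpotent}), and the elimination of good tori inside $K$ via the Frattini argument (Lemma \ref{no-tori-in-K}). The present lemma is therefore a one-step deduction and serves mainly to record, for later use in the proof of Theorem \ref{maintheorem}, that $K$ falls into the $p$-unipotent framework.
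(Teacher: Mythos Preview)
Your proof is correct and matches the paper's own argument exactly: the paper's proof is the single sentence ``This is an immediate consequence of Lemma~\ref{no-tori-in-K} and Proposition~\ref{p-unipotent type}.'' Your additional remark that $K$ is definable (as a pointwise stabiliser) is a reasonable piece of hygiene that the paper leaves implicit.
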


\begin{proof}
This is an immediate consequence of Lemma \ref{no-tori-in-K} and Proposition \ref{p-unipotent type}.
\end{proof}

    \begin{lemma}  \label{N(E)=1} Let $E$ be a Sylow $2$-subgroup in $D$. Then    $X=N_H(E) $ is an ample subgroup. Moreover, $X_K =1$ and $X\simeq  \Sigma_n$.
   \end{lemma}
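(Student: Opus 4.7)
The plan is to combine a Frattini argument, a centralizer computation, and a weight-space permutation argument to force $X_K = 1$.

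First, since $D$ is normal in $H$ (as the preimage of $\overline{E}_n \trianglelefteq \Sigma_n \simeq H/K$) and Sylow $2$-subgroups of $D$ are conjugate in $D$ by Fact~\ref{sylow2}, a standard Frattini argument gives $H = N_H(E)\cdot D = X\cdot KE = XK$; hence $X$ is ample and $X/X_K \simeq H/K \simeq \Sigma_n$.

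Next I would prove $C_K(E) = 1$. Let $y \in C_K(E)$; since $K$ is torsion, decompose $y = y_p y_{p'}$ into commuting $p$- and $p'$-parts, both of which are powers of $y$ and hence still lie in $C_K(E)$. For the $p'$-part, the group $\langle E, y_{p'}\rangle = E\langle y_{p'}\rangle$ is a finite essential subgroup (essential because it already contains $E$) and is a $p'$-group, since $p$ is odd and both $|E|$ and $|y_{p'}|$ are coprime to $p$; Lemma~\ref{key-Maschke} then forces $\langle E, y_{p'}\rangle \cap K = 1$, whence $y_{p'} = 1$. For the $p$-part, Fact~\ref{weightspaces} provides a decomposition $V = V_1 \oplus \cdots \oplus V_n$ into connected rank-$1$ weight spaces of $E$; since $y_p$ centralises $E$ it preserves each $V_i$, and Fact~\ref{trivialaction}(a) says $[V_i, y_p]$ is a proper, hence finite, subgroup of the rank-$1$ connected group $V_i$, which forces $C_{V_i}(y_p) = V_i$. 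Thus $y_p$ acts trivially on $V$, and $y_p = 1$ by faithfulness. Hence $C_K(E) = 1$ and $X_K = N_K(E)$ embeds into $\operatorname{Aut}(E) \simeq \operatorname{GL}_n(\mathbb{F}_2)$, and is in particular finite.

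To finish, a short computation shows that conjugation by any $x \in N_G(E)$ sends the weight-space character $\rho_i$ to some $\rho_i \circ c_x$, so $N_G(E)$ permutes $\{V_1,\dots,V_n\}$ and the induced action of $X \leqslant N_G(E)$ on $E$ takes values in $\operatorname{Sym}_n \subset \operatorname{GL}_n(\mathbb{F}_2)$. One also has $C_X(E) = E$: the inclusion $E \subseteq C_X(E)$ is clear, while $C_X(E) \cap K = C_K(E) = 1$ embeds $C_X(E)$ into $X/X_K \simeq \Sigma_n$ with image in $C_{\Sigma_n}(\overline{E}_n) = \overline{E}_n$, giving $|C_X(E)| \leqslant 2^n = |E|$. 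Therefore $X/E \hookrightarrow \operatorname{Sym}_n$ and $|X/E| \leqslant n!$. A parallel count using $X_D = E X_K$ with $E \cap X_K = 1$ and $X/X_K \simeq \Sigma_n$ gives $|X/E| = n! \cdot |X_K|$, so $|X_K| = 1$ and $X \simeq \Sigma_n$.

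The delicate step is the weight-space permutation: after establishing $C_K(E) = 1$ one only knows that $X_K$ is a finite $p$-subgroup of $\operatorname{GL}_n(\mathbb{F}_2)$, which for small primes $p \leqslant n$ need not be trivial. It is precisely the extra rigidity of the $G$-action on $V$, forcing $X$ to act on $E$ through $\operatorname{Sym}_n$ rather than all of $\operatorname{GL}_n(\mathbb{F}_2)$, that makes the index count collapse $X_K$ to $1$.
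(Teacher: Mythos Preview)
Your argument is correct, but you work harder than necessary in the last paragraph. The paper's proof makes one observation that you miss: since $E \lhd X = N_H(E)$ and $X_K = X\cap K \lhd X$ (because $K \lhd H$), and $E\cap X_K = E_K = 1$ by Lemma~\ref{essential-and Sylows}, the two normal subgroups commute, i.e.\ $[X_K,E]=1$. This gives $X_K \leqslant C_K(E)$ immediately. Your second paragraph already establishes $C_K(E)=1$ (via exactly the same $p'$/$p$ split and weight-space argument that the paper applies directly to $X_K$), so at that point you are done: $X_K \leqslant C_K(E)=1$. The entire third paragraph---the $\operatorname{Sym}_n$-valued action on $E$, the computation $C_X(E)=E$, and the index count $|X/E| = n!\cdot|X_K| \leqslant n!$---is a valid but roundabout way of recovering this one-line normality observation. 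In short: same ingredients, same key lemmas, but the paper collapses your detour by noticing that $X_K$ already centralises $E$.
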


   \begin{proof}
     By the Frattini Argument, $DX = H$, therefore $X$ is an ample subgroup. Since $X_K \lhd X$, $E \lhd X$, and $X_K\cap E=1$ by Lemma~\ref{lem:K-odd}, we have $[X_K, E] = 1$. If $x $ is a $p'$-element in $X_K$, then the subgroup $\langle x\rangle \times E$ is essential and therefore $x=1$ by Lemma \ref{key-Maschke}. Hence $X_K$ is a $p$-group.

     Take the weight decomposition of $V$ with respect to $E$:
     \[
     V = V_1 \oplus \cdots \oplus V_n, \mbox { where } i = 1,2,\dots, n.
     \]
    Note that every element in $X_K$ leaves every $1$-dimensional space $V_i$ invariant. Since $X_K$ is a $p$-group, Fact~\ref{trivialaction} is applicable, thus we conclude that $X_K$ fixes each $V_i$, and hence $V$, elementwise. Therefore, $X_K=1$, which, in its turn,  implies $X\simeq  \Sigma_n$.
   \end{proof}

   \subsection{An almost final configuration: the ample  subgroup $K^\circ \Sigma$}

\begin{lemma} If $m\geqslant 2$ then $G$ is not solvable. \label{notsolvable}
\end{lemma}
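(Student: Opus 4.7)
The plan is to assume $G$ is solvable and derive a contradiction by combining the very rich subgroup structure forced inside $H$ with the representation-theoretic rigidity of minimal actions of connected solvable groups on abelian modules.

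First I would invoke Lemma~\ref{N(E)=1}: there is a Sylow $2$-subgroup $E$ of $D$ such that $N_H(E) \simeq \Sigma_n$, the hyperoctahedral group $\mathbb{Z}_2^n \rtimes \mathrm{Sym}_n$ (recall $m = n$ by Lemma~\ref{m=n}). Then I would apply Fact~\ref{smoothlyirreducible}: since $V$ is a connected abelian group, contains no involutions (as $p$ is odd), has Morley rank $n$, and $\Sigma_n \simeq N_H(E) \leqslant G$ acts definably and faithfully on $V$ (faithfully because $G$ does), the fact yields that $V$ is $\Sigma_n$-minimal. Consequently $V$ is also $G$-minimal, for any infinite definable $G$-invariant subgroup of $V$ would in particular be $\Sigma_n$-invariant.

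Next, I would apply Fact~\ref{minimalaction}: since $G$ is connected solvable and $V$ is an abelian $G$-minimal group, we get $[G,G]$ acts trivially on $V$. But $G$ acts faithfully, so $[G,G] = 1$, meaning $G$ is abelian. This is the contradiction I am after: for $n \geqslant 2$, the group $\Sigma_n = \mathbb{Z}_2^n \rtimes \mathrm{Sym}_n$ is non-abelian (even for $n=2$, where $\Sigma_2$ is the dihedral group of order $8$, the $\mathrm{Sym}_2$-factor acts nontrivially by swapping the two generators of $\mathbb{Z}_2^2$), so $G$ cannot both contain $\Sigma_n$ and be abelian.

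There is no real obstacle; the proof is essentially a three-line deduction once the hyperoctahedral subgroup has been located inside $G$. The only point to double-check is faithfulness of the $\Sigma_n$-action on $V$ (inherited from faithfulness of the $G$-action), and the non-abelianness of $\Sigma_n$ in the borderline case $n=2$, both of which are immediate.
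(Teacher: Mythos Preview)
Your argument is correct and follows essentially the same route as the paper's proof: use Fact~\ref{smoothlyirreducible} (applied to the hyperoctahedral subgroup sitting inside $G$) to see that $V$ is $G$-minimal, then Fact~\ref{minimalaction} forces $G$ abelian, contradicting the non-abelianness of $\Sigma_m$ for $m\geqslant 2$. The paper's proof is terser and does not spell out the invocation of Lemma~\ref{N(E)=1}, but the logic is the same.
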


\begin{proof} If $G$ is solvable, by Fact~\ref{smoothlyirreducible}, we know that Fact~\ref{minimalaction} is applicable, hence  $G'=1$ and $G$ is abelian. However, for $m\geqslant 2$, $\Sigma_m$ is not abelian.
\end{proof}

At the heart  of our proof of  Theorem~\ref{maintheorem}, there is a core configuration.  We set it up by denoting $Q = K^\circ$ and $X =  QN_H(E)$ and listing the properties of $X$ which we have established so far.

\medskip

\begin{quote}
	\emph{The Core Configuration:}
	\bi
	\item $X$ is a group of finite Morley rank acting definably and faithfully on an elementary abelian $p$-group $V$, $p$ odd, of Morley rank $n\geqslant 3$.
	\item $Q\vartriangleleft X$ is a non-trivial connected definable subgroup of $p$-unipotent type; notice that $Q$ is not necessarily nilpotent. We also denote it $Q_n$.
	\item $\Sigma \simeq \Sigma_n$ is a subgroup of $X$ which normalises  $Q$. It will be convenient to denote it just $\Sigma_n$.
	\item Finally, to emphasise the inductive nature of our setting, we may write, if necessary, $X = X_n$.
	\ei
\end{quote}

In the next Lemma \ref{lemma:Q=1} we analyse the Core Configuration  on its own, without using any further information about $G$,  and prove that $Q=1$.


\begin{lemma}
\label{lemma:Q=1}
Under assumptions of  the Core Configuration, $Q_n=1$ for all $n \geqslant 3$.
\end{lemma}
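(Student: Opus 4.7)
The plan is to reduce to the case where $Q$ is abelian by passing to the connected centre $Z(Q)^\circ$, and then to force $Q = 1$ using the $E$-weight decomposition of $Q$ combined with the transposition symmetry in $\operatorname{Sym}_n \leqslant \Sigma_n$.

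\emph{Reduction to abelian $Q$.} By Proposition~\ref{p-unipotent type} the group $Q$ is connected and nilpotent, so $Z(Q)^\circ$ is a non-trivial connected abelian characteristic subgroup of $Q$, hence normal in $X$ and of $p$-unipotent type. Replacing $Q$ by $Z(Q)^\circ$ preserves the Core Configuration, so we may assume $Q$ is abelian.

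\emph{Weight analysis.} Since $Q$ is an abelian $p$-group and $E \simeq \bF_2^n$ acts coprimely, $Q = \bigoplus_\chi Q_\chi$ splits into $E$-weight spaces indexed by characters $\chi\colon E \to \{\pm 1\}$, where $a \in Q_\chi$ iff $a^e = a^{\chi(e)}$ for every $e \in E$. For such an $a$ and $v \in V_i$, a direct computation using $v^e = \rho_i(e)v$ gives
\[
(v^a)^e = \rho_i(e)\, v^{a^{\chi(e)}},
\]
so $v^a + v^{a^{-1}} \in V_i$ while $v^a - v^{a^{-1}}$ lies in the $(\rho_i\chi)$-weight space of $V$. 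Since $V$ has only the singleton weights $\rho_k$ (and $C_V(E) = 0$ because $p$ is odd), this difference vanishes unless $\chi$ is trivial or $\chi = \chi_{\{i, k\}}$ for some $k \neq i$; in all vanishing cases $v^a \in V_i$, so $Q_\chi$ preserves $V_i$ and, being a group of $p$-elements acting on the rank-$1$ group $V_i$, acts trivially on $V_i$ by Fact~\ref{trivialaction}(a). Sweeping over all $i$ and invoking faithfulness, $Q_\chi = 1$ unless $\chi = \chi_{\{i, j\}}$ for a 2-subset $\{i, j\}$; in particular $C_Q(E) = 1$.

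\emph{Killing $Q_{\{i, j\}}$.} Fix $\{i, j\}$. The weight analysis shows $Q_{\{i, j\}}$ acts trivially on $V_k$ for every $k \neq i, j$, hence preserves $V_i \oplus V_j$. For $a \in Q_{\{i, j\}}$, the relation $a^{e_i} = a^{-1}$ together with $e_i$ acting as $\operatorname{diag}(-1, 1)$ on $V_i \oplus V_j$ forces the diagonal blocks of $N := a - 1$ to vanish, giving
\[
N = \begin{pmatrix} 0 & \beta \\ \gamma & 0 \end{pmatrix}, \qquad \beta \colon V_j \to V_i, \quad \gamma \colon V_i \to V_j.
\]
As $a$ is a $p$-element, $N$ is nilpotent, and so is $N^2 = \operatorname{diag}(\beta\gamma, \gamma\beta)$. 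But a non-zero nilpotent definable endomorphism of the connected rank-$1$ group $V_i$ is impossible: its image would be a non-trivial connected subgroup of $V_i$, hence all of $V_i$, contradicting iterated vanishing. So $\beta\gamma = 0$; if moreover $\gamma \neq 0$ then $\gamma(V_i)$ is a non-trivial connected subgroup of $V_j$, hence $V_j$ itself, and $\beta\gamma = 0$ forces $\beta = 0$. Thus each $a$ has $\beta = 0$ or $\gamma = 0$. Applying the same argument to $(I + N_1)(I + N_2) \in Q_{\{i, j\}}$ yields $\beta_1 \gamma_2 = 0$ for all pairs, so $Q_{\{i, j\}}$ cannot contain both a non-trivial element with $\gamma = 0$ and a non-trivial element with $\beta = 0$: it is entirely ``upper'' or entirely ``lower''. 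Finally, the transposition $\sigma = (i, j) \in \operatorname{Sym}_n$ normalises $Q_{\{i, j\}}$ and interchanges $V_i$ with $V_j$, so conjugation by $\sigma$ swaps the upper and lower types. Hence if $Q_{\{i, j\}}$ is entirely upper, its $\sigma$-conjugate is entirely lower; both equal $Q_{\{i, j\}}$, so every element is simultaneously upper and lower, i.e., trivial. Thus $Q_{\{i, j\}} = 1$, and combining over all pairs gives $Q = 1$, contradicting $Z(Q)^\circ \neq 1$.

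\emph{Main obstacle.} The delicate step is converting the abstract $E$-weight decomposition of $Q$ into the concrete block form on $V_i \oplus V_j$, and then extracting from nilpotency plus rank-$1$ connectedness the arithmetic fact ``$\beta\gamma = 0$ implies $\beta = 0$ or $\gamma = 0$'' without needing Macintyre's identification of $V_i$ with an algebraically closed field. Once the block-triangular structure is set up, the hyperoctahedral transposition $(i, j)$ makes the vanishing of each $Q_{\{i, j\}}$ transparent.
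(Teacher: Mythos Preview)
Your reduction to abelian $Q$ is where the argument breaks. You write ``By Proposition~\ref{p-unipotent type} the group $Q$ is connected and nilpotent'', but that proposition says only that $Q$ is of \emph{$p$-unipotent type}: every definable connected \emph{solvable} subgroup of $Q$ is a nilpotent $p$-group. It does not say $Q$ itself is solvable or nilpotent, and the Core Configuration explicitly warns ``notice that $Q$ is not necessarily nilpotent''. If $Q$ happened to be a (hypothetical) connected simple group of $p$-unipotent type, then $Z(Q)^\circ = 1$ and your replacement step collapses. Since your entire weight-space decomposition $Q = \bigoplus_\chi Q_\chi$ requires $Q$ abelian, the proof does not get off the ground.

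The paper's proof sidesteps this obstacle by induction on $n$: one looks at $C^\circ_Q(e_n)$, which by the inductive hypothesis (applied to the action on the eigenspace $V_n^+$ of rank $n-1$) is trivial. An involution with finite centraliser in a connected group inverts it, so $Q$ is then forced to be abelian \emph{a posteriori}, after which a one-line minimality argument via Fact~\ref{smoothlyirreducible} finishes. The base case $n=3$ rests on the classification Facts~\ref{deloro} and~\ref{bordel}.

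Two smaller points, for when you revisit this. First, your claim ``in particular $C_Q(E)=1$'' is not covered by the vanishing-of-$w_-$ argument (for $\chi=1$ the difference lands in $V_i$, not in $0$); you need the separate observation that $C_Q(E)$ preserves each $V_i$ and hence acts trivially by Fact~\ref{trivialaction}(a). Second, ``$a^{e_i}=a^{-1}$ forces the diagonal blocks of $N$ to vanish'' deserves a line of justification: writing $N'=e_iNe_i$, the relation $(I+N')(I+N)=I=(I+N)(I+N')$ gives $N'N=NN'$ and $N+N'=-N'N$, so the block-diagonal matrix $\operatorname{diag}(2A,2D)=N+N'$ is nilpotent, whence $A=D=0$ by your rank-$1$ argument. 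Granted these fixes, your weight-and-transposition analysis for abelian $Q$ is correct and rather elegant --- it would give a classification-free endgame --- but you still need a legitimate route to abelian $Q$.
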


\begin{proof} We proceed by induction on $n$. If $n = 3$,  $Q_3$ is nilpotent in view of Facts~\ref{deloro} and  \ref{bordel},  which give us a basis of induction on $n\geqslant 3$.   For the inductive step for $n >3$, take the involution $e_n$ and consider
\[
C_{Q\Sigma}(e_n) = C^\circ_Q(e_n)C_{\Sigma}(e_n).
\]
Obviously, this group leaves  invariant the eigenspaces $V^+_n$ and $V^-_n$.
Denote  $Q_{n-1} = C^\circ_Q(e_n)$. Observe   $C_{\Sigma}(e_n)=  \Sigma_{n-1} \times \langle e_n\rangle$, and we denote $X_{n-1} = Q_{n-1}\Sigma_{n-1}$.

For the inductive assumption, we have  $Q_{n-1}=1$. Then $C^\circ_Q(e_n) = Q_{n-1} =1$  and $Q_n$ is an abelian $p$-group. Assume $Q_n\neq 1$, then $[V, Q_n]$ is a non-trivial proper connected subgroup of $V$ and is $\Sigma_n$-invariant, which contradicts the minimality of the action of $\Sigma_n$ on $V$, Fact \ref{smoothlyirreducible}.  Contradiction shows $Q_n=1$.
\end{proof}

We can now return to the main proof.


\begin{proposition} \label{prop:almost final}
$K^\circ = 1$.
\end{proposition}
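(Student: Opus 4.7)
My plan is to deduce the proposition cleanly by applying Lemma~\ref{lemma:Q=1} to the Core Configuration just assembled, and to handle the low-rank cases $n=1,2$ separately by direct appeal to the Basis of Induction (Facts~\ref{deloro} and~\ref{zilber}).

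For $n \geqslant 3$, the first step is to verify that the data preceding the proposition genuinely realises the Core Configuration with $Q = K^\circ$ and $\Sigma = N_H(E)$, where $E$ is a Sylow $2$-subgroup of $D$. The normality $K^\circ \vartriangleleft K^\circ N_H(E)$ is automatic: $K$ is clearly normal in $H$, being the kernel of the permutation action of $H$ on $\{\pm a_1,\dots,\pm a_n\}$, and so $K^\circ$ is characteristic in $K$ and normal in $H$, hence in $X = K^\circ N_H(E)$. Lemma~\ref{no-tori-in-K} together with Proposition~\ref{p-unipotent type} guarantees that $K^\circ$ is of $p$-unipotent type, and Lemma~\ref{N(E)=1} identifies $N_H(E)$ with $\Sigma_n$. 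The Core Configuration is then in force with $n \geqslant 3$, and Lemma~\ref{lemma:Q=1} delivers $K^\circ = Q_n = 1$ immediately.

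For $n = 1$, $V$ is a rank-$1$ connected abelian group and thus automatically $G$-minimal; Fact~\ref{zilber} matches the faithful action of $G$ with a subgroup $S \leqslant F^*$ acting naturally on $F^+$, and the hypothesis of generic transitivity forces $S = F^*$, whose stabiliser of any non-zero element is trivial. For $n = 2$, the hypothesis $m \geqslant 2$ together with Lemma~\ref{notsolvable} makes $G$ non-solvable, so Fact~\ref{deloro} identifies $(G,V)$ with either $\operatorname{GL}_2(F) \curvearrowright F^2$ or $\operatorname{SL}_2(F) \curvearrowright F^2$; in either model a generic pair $(a_1,a_2)$ is a basis of $F^2$, and its pointwise stabiliser is the identity matrix. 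In both small-rank cases $K=1$ outright, so $K^\circ = 1$ a fortiori.

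The only point requiring a touch of care is bookkeeping in the small-rank regimes, namely checking that a generic $n$-tuple of elements of $V$ really is linearly independent (so that it corresponds to a basis in the linear models of Facts~\ref{deloro} and~\ref{zilber}); this is a routine Morley-rank count, since the locus of linearly dependent tuples has rank strictly less than $\rk V^n$. The $n \geqslant 3$ case, by contrast, is essentially mechanical once the identification with the Core Configuration is made.
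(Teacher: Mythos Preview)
Your overall strategy matches the paper's: for $n \geqslant 3$ invoke Lemma~\ref{lemma:Q=1} once the Core Configuration is in place, and for small $n$ fall back on the classification results. Your verification that $K^\circ \vartriangleleft X$, that $K^\circ$ is of $p$-unipotent type, and that $N_H(E)\simeq\Sigma_n$ is more explicit than the paper's one-line appeal to Lemma~\ref{lemma:Q=1}, but the substance is identical.

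There is, however, a genuine gap in your treatment of $n=1$. Fact~\ref{zilber} as stated requires the acting group to be connected \emph{abelian}; you apply it to $G$ without checking abelianness, so the identification $G\hookrightarrow F^*$ is not justified as written. (The paper is admittedly no more careful here---it simply asserts that ``we know everything about $G$'' for $n\leqslant 3$ while citing only Facts~\ref{deloro}, \ref{bordel} and Lemma~\ref{notsolvable}, none of which address $n=1$.) A clean patch that stays within the paper's toolkit: when $\rk V=1$, Fact~\ref{trivialaction}(a) shows $[V,x]$ is a proper, hence finite, subgroup for any $p$-element $x$; but $[V,x]=V(x-1)$ is the image of the connected group $V$ and is therefore connected, so $[V,x]=0$ and $x=1$ by faithfulness. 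Thus $K$ has no nontrivial $p$-elements, while by Proposition~\ref{p-unipotent type} every definable connected solvable subgroup of $K$ is a $p$-group; so all such subgroups are trivial, and since any infinite group of finite Morley rank contains an infinite abelian subgroup, $K^\circ=1$.
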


\begin{proof}
	If $n \leqslant 3$, we know everything about $G$ from Facts~\ref{deloro}, \ref{bordel}, and Lemma~\ref{notsolvable}, and  in these cases $K^\circ=1$. If  $n \geqslant 3$, 
	the proposition follows from Lemma~\ref{lemma:Q=1}.
\end{proof}

\begin{corollary} \label{K-is-finite}
 If $X$ is an ample subgroup then $X_K$ is a finite group without involutions and $X = X_K\rtimes \Sigma$.
\end{corollary}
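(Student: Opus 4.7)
The plan is to establish the three assertions in order, using the structural results already in place. First, finiteness of $X_K$ is immediate: Proposition~\ref{prop:almost final} gives $K^\circ = 1$, so $K$ has Morley rank zero and is therefore finite, whence $X_K = X \cap K$ is finite as well.

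For the absence of involutions, note that ampleness of $X$ gives $KX_D = D$, so $X_D = X \cap D$ is an essential subgroup of $D$. Lemma~\ref{lem:K-odd} applied to $X_D$ then yields that $(X_D)_K = X_K$ is a $2^\perp$-group; combined with finiteness this forces $|X_K|$ to be odd by Cauchy's theorem.

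For the semidirect decomposition, I would first record that $K \lhd H$ (as the kernel of the permutation action of $H$ on $\{\pm a_1,\dots,\pm a_n\}$) and $D \lhd H$ (as the preimage of $\overline{E}_n \lhd \Sigma_n$), so $X_K \lhd X$, $X_D \lhd X$, $X/X_K \simeq XK/K = H/K \simeq \Sigma_n$, and $X_D/X_K \simeq \overline{E}_n$. Since $|X_K|$ is odd and coprime to $2^n$, the classical Schur--Zassenhaus theorem applied inside the finite group $X_D$ produces a complement $E^\star$, necessarily elementary abelian of order $2^n$. Because $E^\star \cap K$ is a $2$-subgroup of $X_K$, it is trivial, forcing $E^\star$ to be a Sylow $2$-subgroup of $D$; Lemma~\ref{N(E)=1} then supplies $N_H(E^\star) \simeq \Sigma_n$ and $N_H(E^\star)_K = 1$. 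A Frattini argument in $X$, using $X_D \lhd X$ and the $X_D$-conjugacy of its Sylow $2$-subgroups, gives $X = X_D \cdot N_X(E^\star) = X_K \cdot N_X(E^\star)$, while $N_X(E^\star) \cap X_K \leqslant N_H(E^\star) \cap K = 1$. Setting $\Sigma := N_X(E^\star)$ completes the decomposition $X = X_K \rtimes \Sigma$ with $\Sigma \simeq \Sigma_n$.

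The step I expect to require the most care is verifying that the Schur--Zassenhaus complement $E^\star$ obtained inside $X_D$ is genuinely a Sylow $2$-subgroup of the ambient group $D$, and not merely of $X_D$; this is exactly what opens up Lemma~\ref{N(E)=1}, and it relies crucially on the odd order of $X_K$ secured in step two.
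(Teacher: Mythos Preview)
Your argument is correct and supplies a full proof where the paper states the result as an immediate corollary of Proposition~\ref{prop:almost final} without giving any argument. Your route---finiteness from $K^\circ=1$, absence of involutions via Lemma~\ref{lem:K-odd} applied to the essential subgroup $X_D$, and the complement via Schur--Zassenhaus in $X_D$ followed by a Frattini argument---is exactly the natural unpacking of what the paper leaves implicit.

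One small tightening: your claim that $E^\star$ is a Sylow $2$-subgroup of $D$ is not quite forced by $E^\star\cap K=1$ alone; you also need that Sylow $2$-subgroups of $D$ have order $2^n$. This is immediate from Lemma~\ref{essential-and Sylows} (or, equivalently, from the fact that $K=D_K$ is itself $2^\perp$ by Lemma~\ref{lem:K-odd} applied to the essential subgroup $D$, so that $|D|=|K|\cdot 2^n$ with $|K|$ odd). Once that is recorded, Lemma~\ref{N(E)=1} applies exactly as you use it, and the Frattini step goes through; in fact your order count then shows $N_X(E^\star)=N_H(E^\star)$, so the complement you produce is the specific $\Sigma$ of Lemma~\ref{N(E)=1}, matching the paper's notation.
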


\subsection{The final case: $K$ is finite}

\begin{lemma} \label{lem:K-is-1}
$K = 1$ and $H = \Sigma$.
 \end{lemma}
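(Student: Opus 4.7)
The plan is to show first that $K$ must be a $p$-group, and then to derive $K=1$ from the $\Sigma$-minimality of $V$ given by Fact~\ref{smoothlyirreducible}. By Corollary~\ref{K-is-finite}, $H = K \rtimes \Sigma$ with $K$ finite and without $2$-elements, so proving $K=1$ immediately yields $H=\Sigma$.

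First, I would establish that every $E$-invariant $p'$-subgroup $L$ of $K$ is trivial: the product $LE$ is a subgroup of $D$ containing the Sylow $2$-subgroup $E$, hence essential by Lemma~\ref{essential-and Sylows}, and since $E\cap K = 1$ one has $(LE)_K = L$, so Lemma~\ref{key-Maschke} forces $L=1$. To promote this to the statement that $K$ is a $p$-group, for each prime $q\neq p$ I would find a Sylow $q$-subgroup of $K$ normalised by $E$ via a Frattini/Sylow manoeuvre inside $D$: starting from any Sylow $q$-subgroup $Q\leqslant K$, Frattini gives $D = K\cdot N_D(Q)$, whence $|N_D(Q)| = |E|\cdot|N_K(Q)|$, so any Sylow $2$-subgroup of $N_D(Q)$ has order $|E|$ and is therefore a Sylow $2$-subgroup of $D$ itself, conjugate to $E$ in $D$. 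Replacing $Q$ by a suitable $D$-conjugate then yields an $E$-invariant Sylow $q$-subgroup, which is trivial by the opening observation.

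Assuming now that $K$ is a non-trivial $p$-group, I would establish by induction on $|K|$ that $C_V(K)^\circ \neq 0$. The base $|K| = 1$ is clear. For $|K|>1$, pick $z\in Z(K)$ of order $p$: by Fact~\ref{trivialaction}(a) the chain of definable connected subgroups $V\geqslant V(z-1)\geqslant V(z-1)^2\geqslant\cdots$ descends strictly until it reaches $0$, so its last non-zero term is a non-trivial connected subgroup lying in $C_V(z)$, witnessing $C_V(z)^\circ\neq 0$. Since $z$ is central in $K$, the quotient $K/\langle z\rangle$ acts definably on $C_V(z)^\circ$, and the inductive hypothesis applied there produces a non-trivial connected subgroup of $C_V(K)\cap C_V(z)^\circ \leqslant C_V(K)$.

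The resulting subgroup $C_V(K)^\circ$ is definable, non-trivial, connected and $H$-invariant (since $K\triangleleft H$), in particular $\Sigma$-invariant; but Fact~\ref{smoothlyirreducible} says that $V$ is $\Sigma$-minimal, so $C_V(K)^\circ = V$, forcing $K$ to act trivially on $V$ and contradicting faithfulness. Hence $K = 1$ and $H = \Sigma$. I expect the main technical hurdle to be the Sylow/Frattini bookkeeping in the first step, where one has to exploit the coprimality of $|K|$ (odd) and $|E|$ (a power of $2$) to extract an $E$-invariant Sylow $q$-subgroup from inside $D$.
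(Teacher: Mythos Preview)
Your proposal is correct and follows the same two-step strategy as the paper: a Frattini argument combined with Lemma~\ref{key-Maschke} to force $K$ to be a $p$-group, then $\Sigma$-minimality of $V$ (Fact~\ref{smoothlyirreducible}) to kill $K$. The only difference is that in the second step you exhibit the non-trivial $\Sigma$-invariant connected subgroup $C_V(K)^\circ$ (invariants), while the paper uses the dual object $[V,K]$ (coinvariants) and cites Fact~\ref{trivialaction}(a) rather telegraphically; your explicit induction on $|K|$ simply unpacks what the paper leaves implicit.
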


 \begin{proof}
Let $Q\ne 1$ be a Sylow $q$-subgroup of $K$ for a prime  $q \ne p$, then by the Frattini argument $H=KN_H(Q)$. Denote $X = N_H(Q)$, then $X$ is an ample subgroup. We can assume without loss of generality that $E <X$.  Applying Lemma \ref{key-Maschke} to the essential group $QE$, we see that $Q=1$.  Hence if $K \ne 1$ then  $K$ is a $p$-group. Consider the ample group $K\Sigma$; because of the minimality of the action of $\Sigma$ on $V$
   (Fact~\ref{smoothlyirreducible}), we have $K=1$ by Fact~\ref{trivialaction}(a). Hence $H=\Sigma$.
 \end{proof}

This completes the proof of Theorem \ref{maintheorem}.

\section*{Acknowledgements}

We thank Ali Nesin and staff and volunteers of the Nesin Mathematics Village for their hospitality; a lot of work on this and previous papers in our project was done there. We also thank the anonymous referee for reading our work carefully and making many valuable suggestions.

\end{document}